\newtheorem{premise}[theorem]{Premise}
\crefname{premise}{Premise}{Premises}
\newcommand{\N}{\mathbb{N}}
\title{String Graph Obstacles of High Girth and of Bounded Degree}
\author{Maria Chudnovsky}{Mathematics Department, Princeton University}{mchudnov@math.princeton.edu}{}{Supported in part by NSF Grants DMS-2348219 and  CCF-2505100.}
\author{David Eppstein}{Computer Science Department, University of California, Irvine}{eppstein@uci.edu}{}{Research supported in part by NSF grant CCF-2212129.}
\author{David Fischer}{Mathematics Department, Princeton University}{df1932@princeton.edu}{}{}
\authorrunning{M. Chudnovsky, D. Eppstein, and D. Fischer}
\keywords{string graphs, induced minors, forbidden minors, sparsity, triangle-free graphs, near-planar graphs}
\begin{document}

\EventEditors{Vida Dujmovi\'c and Fabrizio Montecchiani}
\EventNoEds{2}
\EventLongTitle{33rd International Symposium on Graph Drawing and Network Visualization (GD 2025)}
\EventShortTitle{GD 2025}
\EventAcronym{GD}
\EventYear{2025}
\EventDate{September 24--26, 2025}
\EventLocation{Norrk\"{o}ping, Sweden}
\EventLogo{}
\SeriesVolume{357}
\ArticleNo{22}

\maketitle

\begin{abstract}
A string graph is the intersection graph of curves in the plane. Kratochvíl previously showed the existence of infinitely many obstacles: graphs that are not string graphs but for which any edge contraction or vertex deletion produces a string graph. Kratochvíl's obstacles contain arbitrarily large cliques, so they have girth three and unbounded degree. We extend this line of working by studying obstacles among graphs of restricted girth and/or degree. We construct an infinite family of obstacles of girth four; in addition, our construction is $K_{2,3}$-subgraph-free and near-planar (planar plus one edge). Furthermore, we prove that there is a subcubic obstacle of girth three, and that there are no subcubic obstacles of high girth. We characterize the subcubic string graphs as having a matching whose contraction yields a planar graph, and based on this characterization we find a linear-time algorithm for recognizing subcubic string graphs of bounded treewidth. 
\end{abstract}

\section{Introduction}

String graphs, the intersection graphs of curves in the plane~\cite{EhrEveTar-JCT-76}, developed out of the study of the patterns of mutations in DNA sequences~\cite{Ben-PNAS-59}, and of the crossing patterns of wires in circuit designs~\cite{Sin-BSTJ-66}. Beyond the two-dimensional nature of their definitions, string graphs and planar graphs are closely related, although with several important differences. For example, the two classes exhibit similar (but not identical) closure properties.

\begin{lemma} The class of planar graphs is closed under taking minors, and the class of string graphs is closed under taking induced minors~\cite{KraGolKuc-RCA-86}. Explicitly:
    \begin{itemize}
        \item If $G$ is a planar graph, then any graph obtained from $G$ by a sequence of vertex deletions, edge deletions, and edge contractions is also a planar graph.
        \item If $G$ is a string graph, then any graph obtained from $G$ by a sequence of vertex deletions and edge contractions (but not edge deletions) is also a string graph.
    \end{itemize}
\end{lemma}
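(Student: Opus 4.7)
The plan is to handle each bullet by induction on the length of the operation sequence, which reduces to showing closure under a single deletion or contraction. In both cases I would fix an appropriate geometric witness for $G$ and modify it locally, so that the modified witness certifies that the resulting graph lies in the same class.

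For the planar case, I would fix a planar embedding of $G$. Vertex deletion (erasing the point and all incident arcs) and edge deletion (erasing a single arc) obviously preserve planarity. For the contraction of an edge $uv$, I would continuously shrink the Jordan arc representing $uv$ to a single point, dragging its endpoints together along the arc; because that arc is disjoint from every other edge in the embedding, this homotopy introduces no crossings, and the result is a planar embedding of the contracted graph.

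For the string-graph case, I would fix a representation $\{\gamma_v : v\in V(G)\}$ of curves in the plane such that $uv\in E(G)$ iff $\gamma_u\cap\gamma_v\neq\emptyset$. Vertex deletion is immediate: remove $\gamma_v$, and the remaining curves still realize the induced subgraph. For the contraction of an edge $uv$, pick a point $p\in\gamma_u\cap\gamma_v$ and concatenate $\gamma_u$ and $\gamma_v$ at $p$ to form a single curve $\gamma_{uv}$ whose image equals $\gamma_u\cup\gamma_v$. For every other vertex $w$, the curve $\gamma_w$ meets $\gamma_{uv}$ if and only if it met $\gamma_u$ or $\gamma_v$, which matches the adjacency of $w$ with the contracted vertex in $G/uv$.

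The main obstacle is the contraction step for string graphs: one must check that concatenating two curves at a shared point still yields a valid curve (self-intersections are allowed, but a continuous parametrization is required) and that no new intersections with the remaining $\gamma_w$ are created or destroyed. This is the same obstruction that blocks closure under edge deletion in the string-graph setting, and hence explains why edge deletion is conspicuously absent from the second bullet.
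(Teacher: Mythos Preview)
The paper does not supply its own proof of this lemma; it is stated with a citation to Kratochv\'il--Goljan--Ku\v{c}era and used as background. Your argument is the standard one and is correct for both bullets: the planar case is classical, and for string graphs the union $\gamma_u\cup\gamma_v$ indeed realizes the contracted vertex, since a continuous parametrization of the union is easy to write down (traverse $\gamma_u$ from its start to the shared point $p$, detour to cover all of $\gamma_v$ with backtracking as needed, return to $p$, and finish $\gamma_u$).

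One correction to your closing paragraph: the parametrization issue you flag is \emph{not} ``the same obstruction that blocks closure under edge deletion.'' The parametrization point is a harmless technicality, resolved as above. Edge deletion fails for an entirely different, genuinely topological reason: to delete the edge $uw$ you would have to perturb $\gamma_u$ or $\gamma_w$ so they become disjoint, and there is in general no way to do this without creating or destroying other intersections. Concretely, every complete graph is a string graph, yet deleting edges from $K_{15}$ can produce $\mathrm{sub}_1(K_5)$, which is not. So keep those two issues separate; your actual proof of the lemma is unaffected.
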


In general, the class of string graphs is larger than the class of planar graphs, but there is a class of non-string graphs that arises from the class of non-planar graphs. This relationship is formalized in the following lemma, where for a graph $G$, we denote by $\rm{sub}_1(G)$ the graph obtained from $G$ by subdividing each edge once.

\begin{lemma} \label{lem:containments}
    Let $G$ be a graph, and let $H$ denote any graph obtained from $G$ by subdividing each edge at least once, such as $\rm{sub}_1(G)$. Then, the following hold:
    \begin{itemize}
        \item If $G$ is a planar graph, then $H$ is a string graph.
        \item If $G$ is a non-planar graph, then $H$ is a non-string graph~\cite{Sin-BSTJ-66,EhrEveTar-JCT-76}.
    \end{itemize}
\end{lemma}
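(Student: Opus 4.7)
My plan is to prove the two parts separately: the first by a direct construction, and the second by a proof by contradiction, using a hypothetical string representation of $H$ to deduce planarity of $G$.

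For the planar case, I start with a plane embedding of $G$, in which edges are pairwise non-crossing curves meeting only at shared endpoints. I assign to each $v \in V(G)$ a very short curve $c_v$ lying in a small disk around the point representing $v$. For each edge $uv \in E(G)$ with subdivision vertices $w_1,\ldots,w_k$ in $H$ (in order from $u$ to $v$), I chop the drawn curve of $uv$ into $k+1$ overlapping pieces and take $c_{w_i}$ to be the $i$-th interior piece, arranged so that $c_{w_i}$ meets $c_{w_{i-1}}$ and $c_{w_{i+1}}$, and so that the two extreme subdivision curves meet $c_u$ and $c_v$ respectively. Because the original embedding has no crossings, curves placed on distinct edges stay in disjoint tubular neighbourhoods, so intersections occur precisely on the edges of $H$.

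For the non-planar case, I suppose that $H$ admits a string representation $\{c_x : x \in V(H)\}$ with curves in general position, and I pick a point $p_v \in c_v$ for each $v \in V(G)$. For each edge $e = uv$ of $G$ with subdivision path $u - w_1 - \cdots - w_k - v$ in $H$, I form a curve $\gamma_e$ from $p_u$ to $p_v$ that walks along $c_u$ to an intersection point with $c_{w_1}$, then along $c_{w_1}$ to an intersection with $c_{w_2}$, and so on, finally entering $c_v$ and terminating at $p_v$. The key observation is that for any two edges $e, e'$ of $G$ sharing no vertex, all vertices of $H$ used by $\gamma_e$ are non-adjacent in $H$ to all vertices used by $\gamma_{e'}$, so the corresponding strings are pairwise disjoint and hence $\gamma_e, \gamma_{e'}$ can be drawn disjointly. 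This yields a drawing of $G$ in which independent edges cross zero times, so by the strong Hanani--Tutte theorem $G$ is planar, contradicting the hypothesis.

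The main obstacle is the second part: verifying that the curves $\gamma_e$ are well-defined simple arcs, and that intersections inside the common string $c_v$ of two adjacent edges of $G$ do not spuriously force crossings between independent edges. General-position strings handle the former concern, and invoking Hanani--Tutte keeps the final planarity step clean by letting me ignore crossings between adjacent edges altogether, rather than having to re-route the edge curves inside each shared vertex string.
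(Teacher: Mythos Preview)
Your argument is correct in both directions, but the second half takes a noticeably different route from the paper's. The paper does not spell out a proof of this lemma directly; instead it proves the closely related \cref{obs:d2planar}: in a \emph{trimmed} string representation, a degree-two vertex has a string whose two endpoints already realise its two adjacencies, so that string has no proper crossings. Since every edge of $H$ has a degree-two endpoint, no two strings in a trimmed representation of $H$ can cross, and the whole representation is a contact representation, from which a planar drawing of $H$ (and hence of $G$) is read off directly.

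Your approach instead builds concatenated curves $\gamma_e$ and appeals to the strong Hanani--Tutte theorem. This works, and your observation that independent edges of $G$ use pairwise non-adjacent vertices of $H$, hence pairwise disjoint strings, is exactly the right one. The trade-off is that you import a substantial external theorem and must worry about perturbing overlapping arcs at shared endpoints into a genuine drawing before invoking it, whereas the trimming argument is entirely elementary and gives a contact representation (hence a planar embedding) immediately, with no crossings at all to count. On the other hand, your route does not require the preliminary setup of trimmed representations. For the first part, your construction is essentially the same ``trace curves along a planar drawing'' idea that the paper uses throughout.
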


Famously, the class of planar graphs is characterized by two forbidden minors:

\begin{theorem}[Wagner~\cite{Wag-MA-37}] \label{thm:wag}
    A graph $G$ is non-planar if and only if $G$ contains $K_5$ or $K_{3,3}$ as a minor.
\end{theorem}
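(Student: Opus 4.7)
I would prove the two directions separately. The reverse direction is immediate: the class of planar graphs is closed under taking minors (as noted in the earlier lemma), so it suffices to show that $K_5$ and $K_{3,3}$ themselves are non-planar. Euler's formula combined with the face-length bound gives $m \le 3n-6$ for any simple planar graph on $n\ge 3$ vertices, which $K_5$ (with $m=10>9$) violates; the stronger bound $m \le 2n-4$ for triangle-free planar graphs eliminates $K_{3,3}$ (with $m=9>8$).

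For the forward direction, I would take a counterexample $G$ with the minimum number of edges, so $G$ is non-planar but contains no $K_5$- or $K_{3,3}$-minor. The first step is to show that $G$ must be $3$-connected: given any cut of size at most $2$, one splits $G$ along the cut into smaller pieces (adding a virtual edge across the separator if it has size $2$) and uses the minimality hypothesis on each piece either to lift a forbidden minor back to $G$ or to glue planar embeddings of the pieces into a planar embedding of $G$. The second step is to invoke the classical theorem that every $3$-connected graph on at least five vertices contains an edge $e=uv$ whose contraction $G/e$ is still $3$-connected; by minimality, the graph $G/e$ must be planar, since a $K_5$- or $K_{3,3}$-minor in $G/e$ would lift to one in $G$.

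The main obstacle, and the heart of the proof, is the final step: deriving a contradiction from the planar embedding of $G/e$. Let $w$ denote the contracted vertex, and fix a planar embedding of $G/e$; the $3$-connectivity of $G/e$ ensures this embedding is essentially unique. Around $w$ the neighbors of $u$ and of $v$ in $G$ appear in some cyclic order. If all neighbors of $u$ occur consecutively in this order, one can split $w$ back into the edge $uv$ within its face and obtain a planar embedding of $G$, contradicting non-planarity. Otherwise the two neighborhoods must interleave around $w$; using $3$-connectivity to produce disjoint paths between strategically chosen interleaved neighbors, a careful case analysis --- distinguishing whether $u$ and $v$ have three common neighbors or instead have private neighbors that properly interleave --- exhibits a $K_5$- or $K_{3,3}$-minor in $G$, the desired contradiction. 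The preceding reductions to $3$-connectivity and to a contractible edge exist precisely to isolate this single tractable configuration.
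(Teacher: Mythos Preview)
The paper does not prove this statement at all: it is stated as a classical theorem with a citation to Wagner~\cite{Wag-MA-37} and is used only as background. There is therefore no ``paper's own proof'' to compare against.

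That said, your outline is a correct and standard route to Wagner's theorem, essentially Thomassen's short proof: minor-closure plus Euler's formula for one direction; for the other, reduction of a minimal counterexample to the $3$-connected case, the existence of an edge whose contraction preserves $3$-connectivity, and the interleaving analysis around the contracted vertex in the planar embedding of $G/e$. The only point worth sharpening is the reduction to $3$-connectivity: when you split along a $2$-cut and add a virtual edge, you must argue that a $K_5$- or $K_{3,3}$-minor in a piece-with-virtual-edge lifts to one in $G$ (using a path through the other piece to realize the virtual edge), and conversely that planar embeddings of the pieces can be glued so that the virtual edge lies on the outer face of each. These are routine but should be stated explicitly in a full write-up.
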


As a consequence, planar graphs can be recognized in polynomial time~\cite{RobSey-JCTB-95}.

Due to the relationship between planar graphs and string graphs described by \cref{lem:containments}, a natural question that arises is whether the non-string graphs are exactly those that are obtained by subdividing non-planar graphs. Given \cref{thm:wag}, this idea can be formulated as follows.

\begin{premise} \label{con:gen-duality}
    A graph $G$ is a non-string graph if and only if $G$ contains $\rm{sub}_1(K_5)$ or $\rm{sub}_1(K_{3,3})$ as an induced minor.
\end{premise}

 If \cref{con:gen-duality} were true, it would provide a full structural characterization of the string graphs, allowing for many results from the study of planar graphs to carry into string graphs. However, it is known to be false in general, with string graphs exhibiting much more complex behavior than planar graphs; there are infinitely many \emph{string graph obstacles}~\cite{Kra-JCTB-91a}, i.e. graphs that are not string graphs but for which any proper induced minor is a string graph. Furthermore recognition of string graphs is NP-complete~\cite{Kra-JCTB-91b,SchSedSte-JCSS-03}.

 Given the extensive literature in the study of planar graphs, it is of interest to understand for which subclasses of string graphs the statement of \cref{con:gen-duality} holds; or, more generally, for which subclasses there are only finitely many obstructions. The infinite family of obstacles constructed in \cite{Kra-JCTB-91a} has unbounded clique size, so there is no non-trivial lower bound on the girth of the family, and there is no upper bound on the maximum degree. On the other hand, the non-string graphs $\rm{sub}_1(K_5)$ and $\rm{sub}_1(K_{3,3})$ are both triangle-free and have maximum degree five and three, respectively. Thus, two natural starting points to search for regimes under which \cref{con:gen-duality} holds are the restrictions of string graphs to high girth and to bounded degree. These regimes are the main subject of this paper--both in the context of \cref{con:gen-duality}, and more generally in determining whether these classes contain finitely many or infinitely many string graph obstacles.

In \cref{sec:girth}, we study the regime of graphs with girth at least four, i.e. triangle-free graphs, proving the following.

\begin{theorem}
    There exist infinitely many string graph obstacles of girth four.
\end{theorem}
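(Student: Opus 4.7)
The plan is to construct an explicit infinite family $\{G_n\}$ of pairwise non-isomorphic triangle-free string graph obstacles. A first natural candidate is $\mathrm{sub}_1(K_{3,3})$ itself, which can be verified to be a single triangle-free obstacle: it is non-string by \cref{lem:containments}; every vertex deletion yields a planar subgraph, hence a string graph; and every edge contraction yields a near-planar graph that admits an explicit string representation, obtained by drawing a planar embedding of $\mathrm{sub}_1(K_{3,3}-ab)$ (for the appropriate edge $ab$) and routing the contracted edge through a face containing both of its endpoints. The challenge is to turn this single example into an infinite family.

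\textbf{Approach.} To build an infinite family, I would parameterize by attaching additional structure to the basic obstacle. The naive approach of using $\mathrm{sub}_k(K_{3,3})$ for $k\ge2$ fails, because contracting a subdivision edge preserves the non-string property and therefore destroys minimality. Instead, I would attach or combine structures in a way that makes the added material \emph{critical}: any induced-minor operation on it must simultaneously destroy the non-string obstruction. A concrete template is to glue two subdivided copies of a non-planar core along a path whose length depends on $n$, or alternatively to take a long even cycle with a carefully chosen decoration of degree-two branches and one extra chord that forces non-realizability. Non-string for each $G_n$ would then follow from \cref{lem:containments} by exhibiting a suitable subdivided non-planar subgraph inside $G_n$. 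Minimality would be verified through a case analysis over the $O(1)$ orbit-types of vertices and edges in $G_n$: for each type I would produce an explicit string representation of $G_n-v$ or $G_n/f$, exploiting the near-planar structure by starting from a planar embedding of $G_n$ minus its single extra edge and routing that edge as one further curve through a well-chosen face.

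\textbf{Main obstacle.} The principal difficulty is designing the parameter-dependent structure so that the family is simultaneously genuinely infinite, non-isomorphic, triangle-free, $K_{2,3}$-subgraph-free, and minimally non-string. These requirements are in tension: obstructions built simply from containing a subdivided non-planar subgraph are robust under further subdivisions, whereas the obstacle property demands fragility under every single induced-minor operation. Overcoming this tension will likely require the non-string property of $G_n$ to arise from a global topological configuration fine-tuned to the specific join between the parameter-dependent part and the non-planar core, rather than just from the presence of a subdivided non-planar subgraph. I expect most of the technical work to lie in verifying minimality for vertex deletions and edge contractions that take place inside or across this joining structure, since each such operation must be matched with a concrete planar drawing together with a routing of the one extra curve; the remaining case analysis (for operations that sit purely within the non-planar core, or that immediately restore planarity) should follow the analysis sketched above for $\mathrm{sub}_1(K_{3,3})$.
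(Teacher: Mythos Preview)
Your proposal is not a proof but a research plan, and it contains a logical tension that you yourself flag but do not resolve. You write that ``non-string for each $G_n$ would then follow from \cref{lem:containments} by exhibiting a suitable subdivided non-planar subgraph inside $G_n$.'' This cannot work as stated. \cref{lem:containments} applies only when the \emph{whole} graph is a subdivision of a non-planar graph; merely containing such a subdivision as a (non-induced) subgraph says nothing about being a non-string graph. And if instead the subdivided non-planar graph sat inside $G_n$ as an \emph{induced} minor, then $G_n$ would fail to be a minimal non-string graph, since that smaller induced minor would already be non-string. So the very mechanism you intend to use to certify non-stringness is incompatible with the minimality you need to prove. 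You acknowledge this in your ``Main obstacle'' paragraph, but you never propose a way around it, and no concrete family $\{G_n\}$ is ever written down.

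The paper's construction avoids this trap in exactly the way you speculate might be necessary (``a global topological configuration\ldots rather than just from the presence of a subdivided non-planar subgraph''), but actually carries it out. The graphs $\hat N_i$ are built from a planar ``necklace'' $N_i$ (a cycle of $i$ four-cycle links plus one pendant subtree) by adding two apex vertices, a red apex adjacent to one colour class and a blue apex adjacent to the other, with a single shared neighbour (the jewel). Non-stringness for $i\ge 3$ is proved directly (\cref{lem:nonstring}) by a topological argument: in any putative string representation, the red--white and blue--white cycles of $N_i$ force both apex strings and the jewel string into the same region, where the required crossings reduce to an interlacing pattern on a cycle that cannot be drawn planarly. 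No appeal to \cref{lem:containments} is made or possible. Minimality (\cref{lem:minimal}) is then a finite case analysis over the $O(1)$ vertex and edge types, in each case producing a string representation by starting from the near-planar drawing of $\hat N_i$ minus the single blue-apex-to-jewel edge and showing how the given deletion or contraction opens a route for that missing edge. Your outline anticipates the \emph{shape} of this minimality argument correctly, but without the actual construction and the direct non-stringness proof there is nothing to apply it to.
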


We note that in addition to being triangle-free, our construction is also $K_{2,3}$-free and near-planar (planar plus one edge---for more on near-planar graphs see \cite{CabMoh-SICOMP-13}). However, it does not have bounded degree.

Next, in \cref{sec:degree} we study the graphs of maximum degree three, the smallest non-trivial upper bound on degree. We prove that even in this regime, \cref{con:gen-duality} does not hold. However, we also show that obstacles in this regime cannot have arbitrarily large girth.

\begin{theorem} \label{thm:intro-degree}
    There exist string graph obstacles of maximum degree three apart from $\rm{sub}_1(K_{3,3})$. All such obstacles have girth smaller than $30$.    
\end{theorem}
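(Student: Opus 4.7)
The proof relies on the characterization of subcubic string graphs stated in the abstract: a subcubic graph $G$ is a string graph if and only if there is a matching $M \subseteq E(G)$ such that $G/M$ is planar. I treat this characterization as a black box from an earlier section of the paper and deduce both parts of the theorem from it.

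\textbf{Part 1: a subcubic obstacle different from $\mathrm{sub}_1(K_{3,3})$.} My plan is to exhibit an explicit subcubic non-string graph $H$ that contains no induced minor isomorphic to $\mathrm{sub}_1(K_{3,3})$; iterating proper induced-minor reductions inside $H$ then yields a subcubic obstacle distinct from $\mathrm{sub}_1(K_{3,3})$. The candidate $H$ is built from $K_5$ by replacing each of the five vertices with a $4$-cycle, distributing the four incident edges of the original vertex to the four cycle-vertices so that $H$ has maximum degree $3$, and then subdividing each inter-cluster edge once. Contracting the five $4$-cycles exhibits $K_5$ as an induced minor of $H$, so Lemma~\ref{lem:containments} gives that $H$ is a non-string graph. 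The remaining step is to rule out $\mathrm{sub}_1(K_{3,3})$ as an induced minor of $H$, which I would handle by a case analysis on how the six branch sets of an alleged induced model of $K_{3,3}$ could be distributed among the five clusters, using the key observation that no branch set can straddle two clusters without disconnecting a third branch set from one of them.

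\textbf{Part 2: no subcubic obstacle has girth $\geq 30$.} Suppose for contradiction that $G$ is a subcubic obstacle of girth at least $30$. Fix any vertex $v \in V(G)$; by minimality $G \setminus v$ is a string graph, so by the characterization there is a matching $M_0$ in $G \setminus v$ with $P := (G \setminus v)/M_0$ planar. I plan to enlarge $M_0$ into a matching $M$ of $G$, by adding a single edge $vu$ for a carefully chosen neighbor $u$ of $v$ left unmatched by $M_0$, so that $G/M$ remains planar; by the characterization this would exhibit $G$ itself as a string graph, contradicting that $G$ is a non-string graph.

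The technical heart is the choice of $u$, together with a possible local rerouting of $M_0$, so that the one or two extra edges from the contracted vertex $vu$ to the remaining neighbors of $v$ can be drawn without crossings in a suitable planar embedding of $P$. Here the girth hypothesis is essential: the ball of radius roughly $15$ around $v$ in $G$ is a subcubic tree, which projects into $P$ as a tree-like region with abundant face incidences around the images of $v$'s neighbors. A careful count bounding the length of the faces of $P$ near $v$ (via Euler's formula, combined with the fact that matching contraction shortens each face by a controlled amount) shows that whenever the girth is at least $30$ some choice of $u$ makes the extension planar. This extension argument is where I expect the main obstacle to lie; the surrounding steps---the vertex-deletion reduction and both directions of the subcubic characterization---are either mechanical or cited.
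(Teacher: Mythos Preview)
There is a genuine gap. Your graph $H$ is indeed subcubic and non-string (note: contracting the five $4$-cycles yields $\mathrm{sub}_1(K_5)$, not $K_5$---and that is what you need, since $K_5$ itself is a string graph). But the step ``iterating proper induced-minor reductions inside $H$ then yields a subcubic obstacle'' does not follow: the class of subcubic graphs is \emph{not} closed under induced minors, because contracting an edge with two degree-$3$ endpoints can create a degree-$4$ vertex. In particular, $\mathrm{sub}_1(K_5)$ sits inside your $H$ as an induced minor and has maximum degree~$4$; the obstacle you extract from $H$ may well be exactly that, in which case you have produced nothing subcubic. Even granting your case analysis that $H$ avoids $\mathrm{sub}_1(K_{3,3})$, you have not shown the existence of a \emph{subcubic} obstacle other than $\mathrm{sub}_1(K_{3,3})$. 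The paper avoids this closure issue entirely by exhibiting explicit subcubic graphs and verifying directly, via the matching characterization, that each is minimally non-string.

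\textbf{Part 2.} Your strategy---delete a vertex $v$, take a planarizing matching $M_0$ for $G\setminus v$, then extend $M_0$ across $v$ and argue planarity survives---is different from what the paper does, and you are right to identify the extension step as the crux; as written it is only a sketch (you would need the images of $v$'s neighbours to share a face in some embedding of $(G\setminus v)/M_0$, and high girth alone does not obviously force this). The paper does not attempt any local extension. Instead it runs a global density count: an obstacle has no vertices of degree $\le 1$ and no two adjacent degree-$2$ vertices, forcing $|E(G)|/|V(G)|\ge 6/5$; after deleting one vertex and contracting a planarizing matching, the resulting planar graph still has edge density at least $34/29$ and girth at least $15$, contradicting the Euler bound $|E|/|V|\le g/(g-2)$ for planar graphs of girth $\ge g$. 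This is short, uses the girth hypothesis only through that single inequality, and yields the constant $30$ directly.
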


To prove \cref{thm:intro-degree}, we obtain a novel characterization of the subcubic string graphs: 

\begin{theorem} \label{thm:intro-recognition}
    Let $G$ be a graph with maximum degree three. Then $G$ is a string graph if and only if there exists a matching $M$ of $G$ such that the graph obtained from $G$ by contracting $M$ is planar.
\end{theorem}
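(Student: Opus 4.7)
The plan is to prove the two implications separately, with the ``if'' direction being substantially more straightforward than the ``only if'' direction.

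For the ``if'' direction, assume $M$ is a matching and $H = G/M$ admits a planar embedding. Fix such an embedding and thicken it so that each vertex $w$ of $H$ becomes a small closed disk $D_w$, and each edge of $H$ becomes a thin strip joining the relevant disks, with disks and strips having otherwise disjoint interiors. For each unmatched vertex $u$ of $G$, define a curve $\sigma_u$ contained in $D_u$ together with half of each strip of an edge of $H$ incident to $u$, so that $\sigma_u$ meets exactly the curves of its $G$-neighbors, each at the midpoint of the corresponding strip. For each matched pair $\{a,b\} \in M$, contracted to a vertex $w$ of $H$, place both $\sigma_a$ and $\sigma_b$ inside $D_w$, crossing each other at least once (witnessing the edge $ab$), with $\sigma_a$ extending into the half-strips of edges of $G$ incident to $a$ and $\sigma_b$ into those incident to $b$. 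Because no other curve enters $D_w$, and each strip carries only the two curves of its endpoints in $G$, the intersection pattern of the $\sigma$'s is exactly $E(G)$. The cyclic order of $a$'s and $b$'s edges around $w$ may be interleaved, but this causes no difficulty since $\sigma_a$ and $\sigma_b$ are allowed to cross each other multiple times inside $D_w$.

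For the ``only if'' direction, start from a string representation of $G$; by standard cleanup we may assume each adjacent pair of curves crosses in exactly one point. Because $G$ has maximum degree three, each curve $\sigma_v$ contains at most three crossings, appearing in a linear order along the curve. For each vertex $v$ of degree three, the middle crossing distinguishes a neighbor of $v$; the plan is to build $M$ from these middle-neighbor designations, together with a conflict-resolution rule that handles vertices whose designations are asymmetric. To show $H = G/M$ is planar, I would then redraw the arrangement by ``merging'' each matched pair of curves across their common crossing into a single curve. The resulting arrangement should admit a planar drawing whose combinatorial structure is $G/M$, essentially reversing the thickening construction of the ``if'' direction.

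The principal obstacle will be the ``only if'' direction. Two subtleties demand careful treatment: defining $M$ robustly (the naive middle-neighbor rule may produce conflicts, and one must argue they can always be resolved into a genuine matching), and verifying that the merging step truly yields a planar embedding of $G/M$ rather than merely reducing the crossing count. The subcubic hypothesis should be essential, since it caps the crossing complexity of each curve and restricts how crossings near any single vertex can interact. If a direct construction proves intractable, an alternative is to argue by induction on the number of crossings in a minimum-crossing representation, first handling a carefully chosen local configuration and then appealing to the inductive hypothesis on a smaller instance.
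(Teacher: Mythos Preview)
Your ``if'' direction is correct and essentially identical to the paper's argument.

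For the ``only if'' direction, your proposal has a genuine gap, and the paper avoids it through a different normalization of the string representation. You propose to clean up so that each adjacent pair of curves crosses exactly once, then use the middle crossing of each degree-three curve to designate a matching partner. This leaves two problems unresolved. First, the designations need not be symmetric: $v$'s middle neighbor may be $w$ while $w$'s middle neighbor is some third vertex, and you give no argument that such conflicts can always be resolved into a matching whose contraction planarizes. Second, and more fundamentally, even if you obtain a matching $M$ this way, merging each matched pair of curves only removes the middle crossings; the end crossings of every degree-three curve remain genuine crossings, so the merged arrangement is not a contact system and you have not shown that $G/M$ is planar. Your sketch asserts that the merging ``should'' yield a planar drawing but supplies no mechanism for it.

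The paper's key idea is to pass to a \emph{trimmed} representation instead: one in which each string is a minimal arc among those meeting the same set of other strings. In such a representation the two endpoints of every string are contact points with neighboring strings, and (by minimality) a pair of strings that touch at an endpoint cannot also cross properly. Hence for a vertex $v$ of degree at most three, two of $v$'s adjacencies are realized as endpoint contacts, and at most one neighbor $w$ is met at proper crossings. Applying the same reasoning to $w$ shows that $v$ is $w$'s unique crossing partner, so the crossing pairs form a matching $M$ automatically, with no conflict resolution needed. Replacing each matched pair by the union of their curves then produces a system with contacts only and no crossings, i.e.\ a contact representation of $G/M$, which is therefore planar. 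The trimming step is exactly what turns your troublesome end crossings into harmless endpoint contacts; without it the argument does not close.
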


Finally, in \cref{sec:recognition}, we finish by proving an algorithmic corollary of \cref{thm:intro-recognition}:

\begin{theorem}
    The problem of deciding whether a graph with maximum degree three is a string graph is fixed-parameter tractable in treewidth. For any graph class of maximum degree $3$ and bounded treewidth, the problem is decidable in linear time.
\end{theorem}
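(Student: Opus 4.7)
The plan is to reduce this algorithmic question to an MSO$_2$ model-checking problem and then invoke Courcelle's theorem. By \cref{thm:intro-recognition}, for a graph $G$ of maximum degree three, being a string graph is equivalent to the existence of a matching $M \subseteq E(G)$ such that $G/M$ is planar. The strategy is to encode this existence statement as a single MSO$_2$ sentence $\varphi$ with $G \models \varphi$ iff $G$ is a string graph; then, since MSO$_2$ model-checking on graphs of treewidth at most $k$ runs in time $f(k,|\varphi|)\cdot n$ for a computable $f$, this immediately yields fixed-parameter tractability in treewidth and linear-time decidability whenever the treewidth is bounded by a constant. Note that the maximum-degree-three assumption plays no further role in the algorithm once the characterization has been invoked.

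To build $\varphi$, I would first existentially quantify an edge set $M$ and impose the standard MSO$_2$ constraint that $M$ is a matching (no two edges of $M$ share an endpoint). It then remains to express ``$G/M$ is planar'' as an MSO$_2$ formula on $G$. By Wagner's theorem (\cref{thm:wag}), this is the conjunction of ``$G/M$ has no $K_5$ minor'' and ``$G/M$ has no $K_{3,3}$ minor'', so it suffices to express ``$G/M$ has $H$ as a minor'' for fixed $H\in\{K_5,K_{3,3}\}$. This I would do by quantifying pairwise disjoint vertex sets $U_1,\dots,U_{|V(H)|}$ of $G$ that are each \emph{$M$-closed} (for every $uv\in M$, either $\{u,v\}\subseteq U_i$ or $\{u,v\}\cap U_i=\emptyset$), each inducing a connected subgraph of $G$ (a standard MSO$_2$ predicate), and such that for every edge $v_iv_j$ of $H$ some edge of $G$ has one endpoint in $U_i$ and the other in $U_j$. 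Since the contraction $G/M$ identifies precisely the endpoints of edges of $M$, $M$-closed connected subsets of $V(G)$ correspond bijectively to connected subsets of $V(G/M)$, so these conditions encode exactly the existence of a branch decomposition of $H$ in $G/M$.

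With $\varphi$ in hand, the theorem follows from two standard ingredients: Courcelle's theorem, which guarantees that for every fixed MSO$_2$ sentence checking $G\models\varphi$ is FPT in the treewidth of $G$ and runs in linear time once the treewidth is bounded; and Bodlaender's linear-time tree-decomposition algorithm, which makes a width-$k$ decomposition available at no asymptotic cost. The only genuinely delicate point is getting the $M$-closedness-plus-connectivity encoding right, so that the branch sets quantified inside $G$ correspond faithfully to a minor of $G/M$ rather than of $G$; everything else is off the shelf. A more explicit alternative would be a direct dynamic program on a nice tree decomposition that tracks, at each bag, the restriction of $M$ together with combinatorial-embedding information for the already-processed part of $G/M$; this would avoid the large constants hidden in Courcelle's theorem at the price of substantial bookkeeping, so I would attempt the MSO$_2$ route first and turn to the dynamic program only if explicit constants are required.
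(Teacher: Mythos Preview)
Your proposal is correct and follows essentially the same route as the paper: invoke \cref{thm:intro-recognition}, existentially quantify the edge set $M$, express planarity of $G/M$ via Wagner's theorem by testing for $K_5$ and $K_{3,3}$ minors using disjoint, connected, $M$-closed vertex sets, and apply Courcelle's theorem. The only cosmetic differences are that you make the matching constraint on $M$ and the use of Bodlaender's algorithm explicit, whereas the paper leaves these implicit.
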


We note that recognition of string graphs of maximum degree four is an NP-problem \cite{MP21,Kra-JCTB-91b}, whereas the complexity is open for graphs of maximum degree three. It is known that it is NP-complete to check whether a graph has a matching that planarizes the graph when contracted \cite{AH82,AKKV17}, but this is without any restriction on the maximum degree of the input graph.



\section{Preliminaries}

By $\N$ we denote the set of positive integers. We define a \emph{string representation} of a string graph to be a system of curves, called \emph{strings}, associated with the vertices of the graph, that have the given graph as their intersection graph. Rather than considering arbitrary systems of curves in the plane, which may cross each other infinitely many times or even cover nonzero area, it is convenient to consider certain more well-behaved but still fully general string representations. A \emph{Jordan arc} is a curve ambient isotopic to a line segment in the plane. As a degenerate case we also consider a single point to be a Jordan arc. We define a \emph{proper string representation} to be a string representation in which each string is a Jordan arc, each two strings have finitely many points of intersection, at most two strings intersect at any point, and each intersection point is either a crossing of two strings or an endpoint of at least one of the intersecting strings. The proof of the following fact is straightforward and we omit it.

\begin{lemma}
Every string graph has a proper string representation.
\end{lemma}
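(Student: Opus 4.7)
The plan is to start with an arbitrary string representation $\{s_u : u \in V(G)\}$ of $G$ and modify it in two stages, preserving the intersection graph throughout: first replace each curve by a Jordan arc, then perturb the resulting arcs into general position.

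For the first (\emph{arcification}) stage, I would pick, for every edge $uv$ of $G$, a single witness point $p_{uv} \in s_u \cap s_v$. For each vertex $u$, the image $S_u := s_u([0,1])$ is a Peano continuum containing the finite set $P_u := \{p_{uv} : uv \in E(G)\}$. By the classical fact that any two points of a Peano continuum are joined by a simple arc lying in it, applied inductively along the linear order that $s_u$ itself induces on $P_u$ via its parametrization, I can extract a Jordan arc $s_u' \subseteq S_u$ passing through every point of $P_u$. The family $\{s_u'\}$ then has the same intersection graph as $\{s_u\}$: every edge is still witnessed by the corresponding $p_{uv}$, and no new intersections can arise because $s_u' \subseteq S_u$ is disjoint from $S_w$ (and hence from $s_w'$) whenever $uw \notin E(G)$.

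For the second stage, I would place the arcs $\{s_u'\}$ in general position by an arbitrarily small ambient isotopy. Standard transversality, or equivalently a piecewise-linear perturbation, makes it routine to arrange that every pair of arcs meets in only finitely many points, no three arcs pass through a common point, and every intersection is either a transversal crossing or an endpoint of one of the two arcs. A sufficiently small perturbation destroys no existing intersection and creates no new one, so the intersection graph remains $G$, and the resulting family is a proper string representation.

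The main obstacle is the arcification step, where one must contend with the geometric wildness of the original curves (which may be space-filling or densely self-intersecting) while still producing a simple arc that visits the prescribed witness points in the prescribed order. The Peano-continuum arc-connectedness theorem, combined with the parametrization order on $P_u$, handles this cleanly; an alternative route is to first polygonally approximate each $s_u$ finely enough to preserve its pattern of crossings and non-crossings with all other strings, and then extract a simple subpath of that polygon through the chosen witness points. Once this step is in hand, the general-position perturbation of the second stage is entirely standard.
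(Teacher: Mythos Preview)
The paper itself omits this proof, declaring the fact ``straightforward,'' so there is nothing to compare your argument against directly. Your two--stage outline (arcify, then perturb to general position) is the natural strategy, but the arcification step as you state it contains a genuine gap.

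You claim that inside the Peano continuum $S_u = s_u([0,1])$ one can always find a Jordan arc through the finite witness set $P_u$, by concatenating arcs obtained from arc-connectedness in parametrization order. This is false in general. Take $S_u$ to be a ``theta'' continuum with two branch points $a,b$ joined by four internally disjoint arcs $\alpha,\beta,\gamma,\delta$, traced by $s_u$ in the order $\alpha,\beta^{-1},\gamma,\delta^{-1}$, and let $P_u$ consist of one interior point on each of the four arcs. Any simple arc in $S_u$ can visit each of $a$ and $b$ at most once, hence can switch among $\alpha,\beta,\gamma,\delta$ at most twice, so it meets at most three of them; no Jordan arc through all four witness points exists. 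Your inductive concatenation would produce $p_1\to b\to p_2\to a\to p_3\to b\to p_4$, revisiting $b$. The alternative you sketch (polygonally approximate, then extract a ``simple subpath'' through the witnesses) hits the same obstruction, since the image of the PL approximation can have the same shape.

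The standard fix is to allow $s_u'$ to lie in a small open neighbourhood $U_u$ of $S_u$ rather than in $S_u$ itself. Taking the $U_u$ to be $\varepsilon$-neighbourhoods with $\varepsilon$ small enough that $U_u\cap U_w=\emptyset$ for every non-edge $uw$ preserves all non-adjacencies, and in an open connected planar set one \emph{can} thread a simple (indeed polygonal) arc through any finite set of prescribed points, because a compact simple arc does not disconnect such a set. A second, smaller issue: your sentence ``a sufficiently small perturbation destroys no existing intersection'' is not literally true---two arcs meeting only tangentially can be separated by an arbitrarily small perturbation---so at each witness point you should first push one arc across the other to make the intersection transversal before invoking genericity elsewhere.
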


Each subset of the strings of a proper string representation defines a proper string representation of an induced subgraph. That is not true for the next simplifying assumption to make. A \emph{trimmed string representation} is a proper string representation in which each string is a Jordan arc, and in which it is not possible to replace any string by a proper subset of itself and obtain a string representation of the same graph. That is, each string is minimal among arcs having crossings with the same set of strings. A trimmed string representation may be obtained from a proper string representation by replacing each string by a minimal arc, one at a time.

\begin{observation}
In a trimmed string representation, each string endpoint is an intersection point of two strings. Two strings can intersect either in an endpoint of one or both strings, or in one or more crossing points, but not both.
\end{observation}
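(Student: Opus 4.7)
The plan is to verify both claims by producing, in each alleged violation, a proper sub-arc of the offending string whose substitution preserves the intersection graph, contradicting the definition of a trimmed representation.

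For the first claim, I would suppose that some string $s$ has an endpoint $p$ that lies on no other string. Parametrise $s$ as a Jordan arc with $s(0)=p$. Because $s$ has only finitely many intersection points and each is isolated, there exists $\epsilon>0$ such that $s([0,\epsilon])$ is disjoint from every other string. Replacing $s$ by $s([\epsilon,1])$ yields a proper Jordan sub-arc with exactly the same crossings and endpoint-meetings with every other string as $s$, contradicting minimality.

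For the second claim, I would suppose strings $s$ and $s'$ share both a crossing intersection $q$ and a separate intersection $p$ that is an endpoint of at least one of them, say of $s$. In a proper representation at most two strings meet at any point, so no third string passes through $p$. Choose $\epsilon>0$ so small that the sub-arc of $s$ emanating from $p$ of parameter length $\epsilon$ meets no intersection of $s$ other than $p$ itself, and trim it off. Adjacencies between $s$ and strings other than $s'$ are undisturbed, and $s,s'$ remain adjacent via the crossing $q$, so the intersection graph is unchanged while $s$ has been shrunk, contradicting trimming. When $p$ is an endpoint of $s'$ only, the same argument applies with the roles of $s$ and $s'$ swapped.

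The only mild technical subtlety is checking that each shortened curve is still a Jordan arc and still witnesses every required intersection; this reduces to choosing $\epsilon$ small enough to avoid a finite set of isolated points on the string, which is routine. Since these two arguments together rule out exactly the configurations prohibited by the observation—free endpoints, and pairs of strings sharing both endpoint and crossing intersections—the proposal is complete.
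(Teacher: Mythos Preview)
Your proposal is correct and follows the same idea as the paper: in each forbidden configuration, the offending string can be replaced by a strictly shorter sub-arc without changing the intersection graph, contradicting minimality of a trimmed representation. Your write-up is in fact more complete than the paper's, which only spells out the second claim; your explicit treatment of the free-endpoint case and the careful choice of $\epsilon$ avoiding the finitely many intersection points fill in details the paper leaves implicit.
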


\begin{proof}
If two strings had an intersection point at an endpoint, and also another intersection point at a crossing, they would not be minimal for the set of strings that they cross, because the string that ends at an intersection point could be shortened without changing the set of strings that it crosses.
\end{proof}

It is known that, when a graph is formed from another graph by subdividing each edge into a path of two or more edges, the subdivided graph is a string graph if and only if the original graph is planar~\cite{Sin-BSTJ-66}. This idea can be formalized as the following observation:

\begin{figure}[h]
    \centering\includegraphics[scale=0.3]{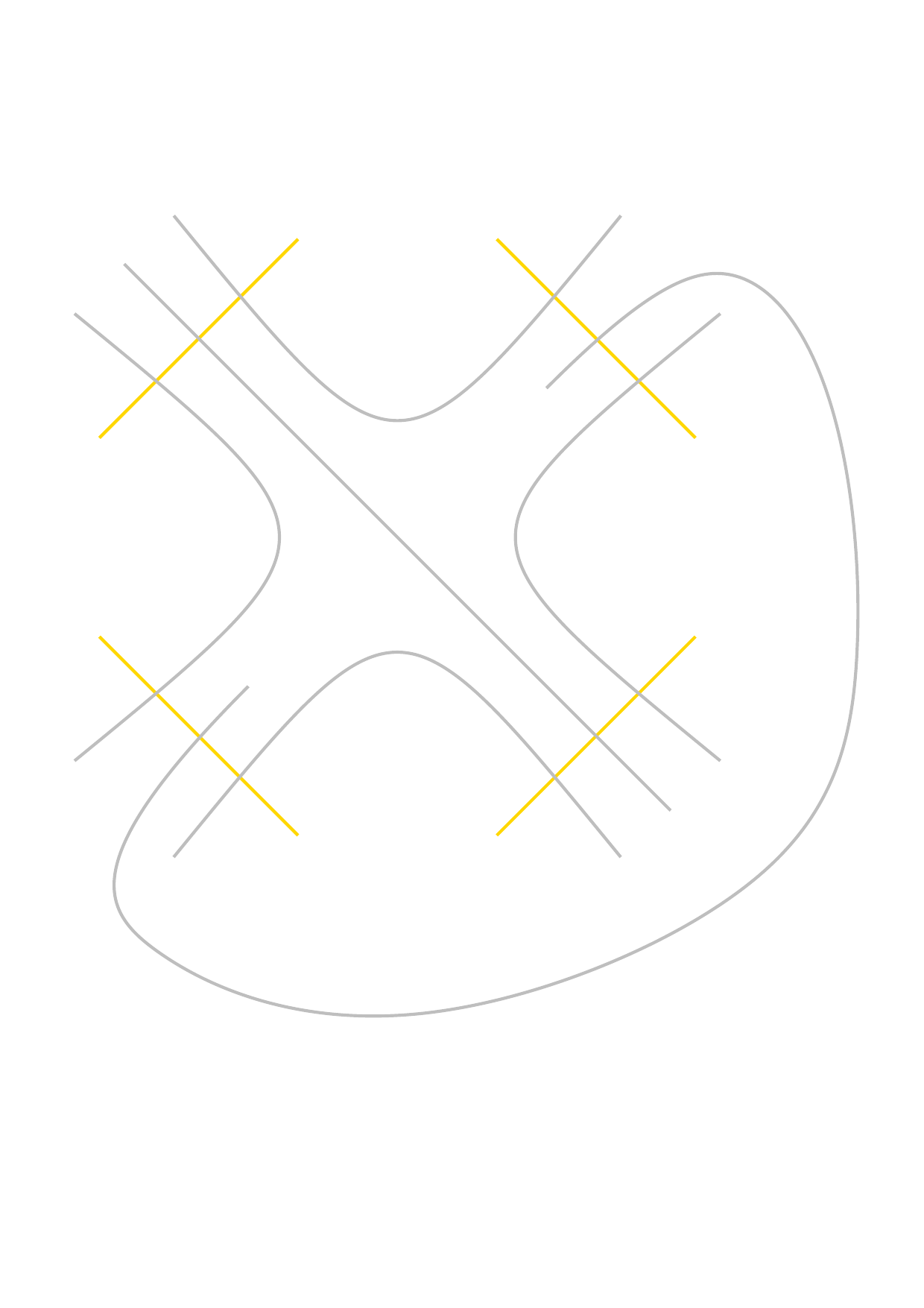}
    \qquad
    \centering\includegraphics[scale=0.3]{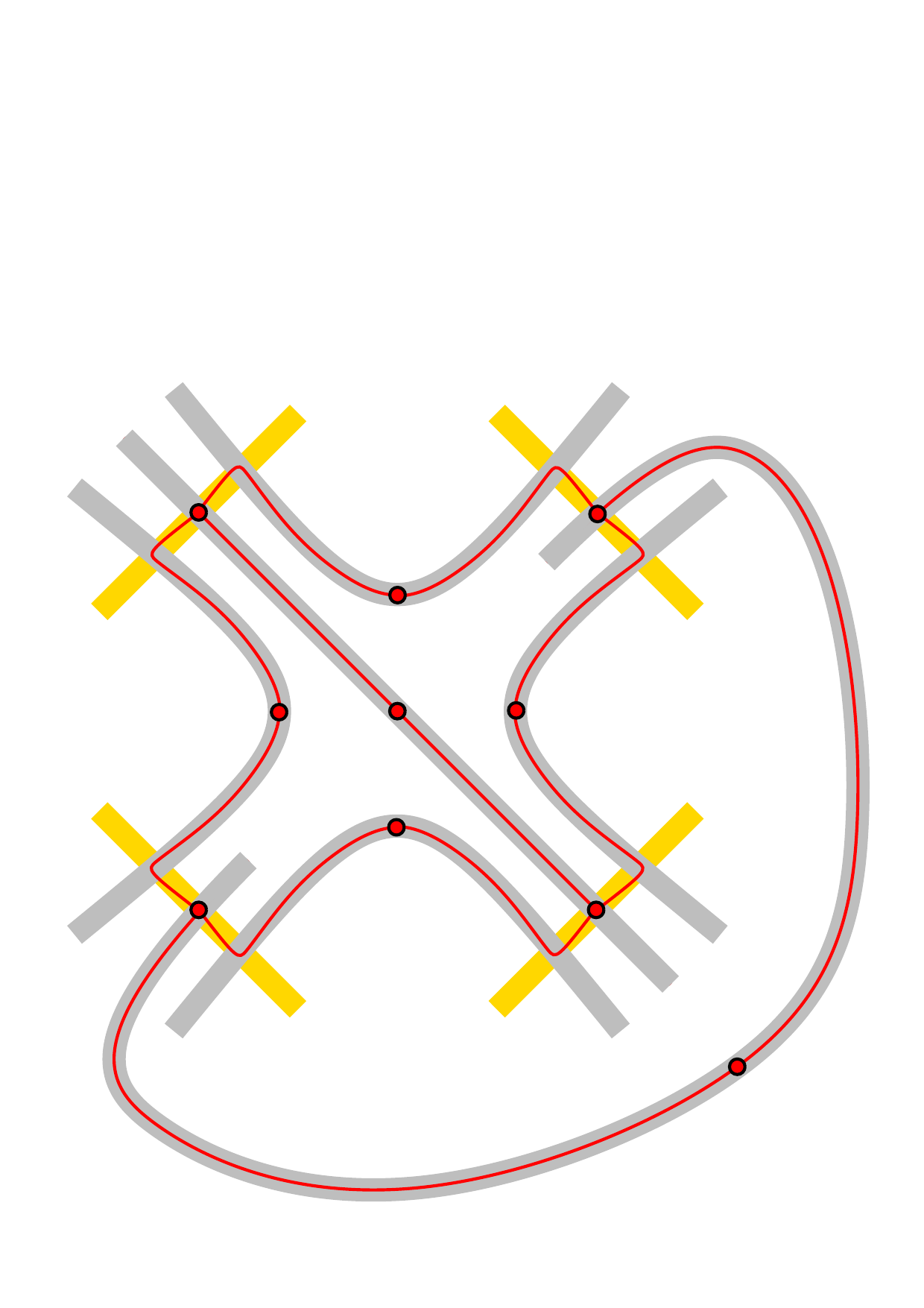}

    \caption{An example of \cref{obs:d2planar}. On the left is a string representation of ${\rm sub}_1(K_4)$, and on the right is a planar representation within (the trimmed part of) a thickening of the string representation.}
\end{figure}

\begin{observation}
\label{obs:d2planar}
If a string graph has the property that every edge has a degree-two endpoint, then it is a planar graph, and any trimmed string representation forms a contact representation by a system of Jordan arcs. If these arcs are thickened to topological disks, then the union of the resulting system of disks contains a planar drawing of the graph in which each vertex is represented by a point within its disk and each edge is represented by a curve within the union of two disks connecting two of these representative points.
\end{observation}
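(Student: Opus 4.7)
The plan is to prove the observation in two steps: first, that any trimmed string representation of such a graph has no crossings, and hence is in fact a contact representation by Jordan arcs; second, that this contact representation yields a planar drawing via a thickening construction.

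For the first step, I would argue by contradiction. Suppose that in the trimmed representation some two strings $s_a$ and $s_b$ cross. By hypothesis at least one endpoint of the edge $ab$ has degree two; say $a$ has degree two with neighbors $b$ and $c$, so $s_a$ intersects only $s_b$ and $s_c$. The preceding observation says that each endpoint of the Jordan arc $s_a$ is an intersection with another string, so both endpoints of $s_a$ lie on $s_b \cup s_c$. It also says that two strings intersect either only in endpoints or only in crossings, never both; since $s_a$ and $s_b$ are assumed to cross, $s_a$ can have no endpoint on $s_b$, so both endpoints of $s_a$ lie on $s_c$. But then $s_a$ can be shortened: cut it at a point just past the crossing with $s_b$, discarding one of its two endpoints. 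The resulting proper subarc still meets $s_b$ at the retained crossing point and still meets $s_c$ at the retained endpoint, and inherits no new intersections; this yields a string representation of the same graph using a proper subset of $s_a$, contradicting trimming. Hence no crossings exist, so the representation is a contact representation by Jordan arcs.

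For the second step, I would thicken each arc $s_v$ to a topological disk $D_v$, chosen narrow enough that $D_u \cap D_v$ is empty whenever $s_u$ and $s_v$ do not touch, and is otherwise a small topological disk surrounding their single contact point. Inside each $D_v$ pick an arbitrary point $p_v$ to represent the vertex $v$. For each edge $uv$, route a curve from $p_u$ to $p_v$ that first runs through $D_u$ to the contact point $c_{uv}$ of $s_u$ and $s_v$ and then through $D_v$ from $c_{uv}$ to $p_v$. Since each $D_v$ is a topological disk and the edges incident to $v$ go from $p_v$ to distinct contact points on $s_v$, the finitely many curves inside $D_v$ can be drawn pairwise non-crossing; and since the only regions where two disks overlap are small neighborhoods of single contact points (each containing a segment of just one edge), no crossings occur between edges routed through different disks. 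The resulting drawing lies inside $\bigcup_v D_v$ and is planar, proving that $G$ is planar.

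The main obstacle is the trimming argument in the first step; the key point driving it is that once a degree-two vertex's string is forced to cross one of its two neighbors' strings in its interior, the other endpoint of the string becomes redundant and can be trimmed away. The remainder is a routine thickening of a contact representation, which I expect to require only a sentence or two of justification in the final write-up.
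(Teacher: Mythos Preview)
Your proof is correct and follows essentially the same approach as the paper: both use the preceding observation about trimmed representations to argue that a degree-two vertex's string can have no crossings, and then note that every pair of intersecting strings includes such a string. Your version is more explicit---you spell out the trimming contradiction and supply the thickening argument for the planar drawing---whereas the paper compresses the first step into two sentences and leaves the thickening entirely implicit.
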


\begin{proof}
No degree-two vertex can have a string with a crossing, because its two endpoints represent its only two adjacencies. If every edge has a degree-two endpoint, there can be no crossings, because each two intersecting strings include one that can have no crossings.
\end{proof}

Here the planarity of the resulting graph requires that we are considering contact graphs of curves for which at most two can intersect in any point. Contact systems of curves without a restriction to pairwise intersections may be nonplanar~\cite{Hli-GD-95}.

\section{Triangle-free Obstacles} \label{sec:girth}

\begin{figure}[h]
\centering\includegraphics[height=4in]{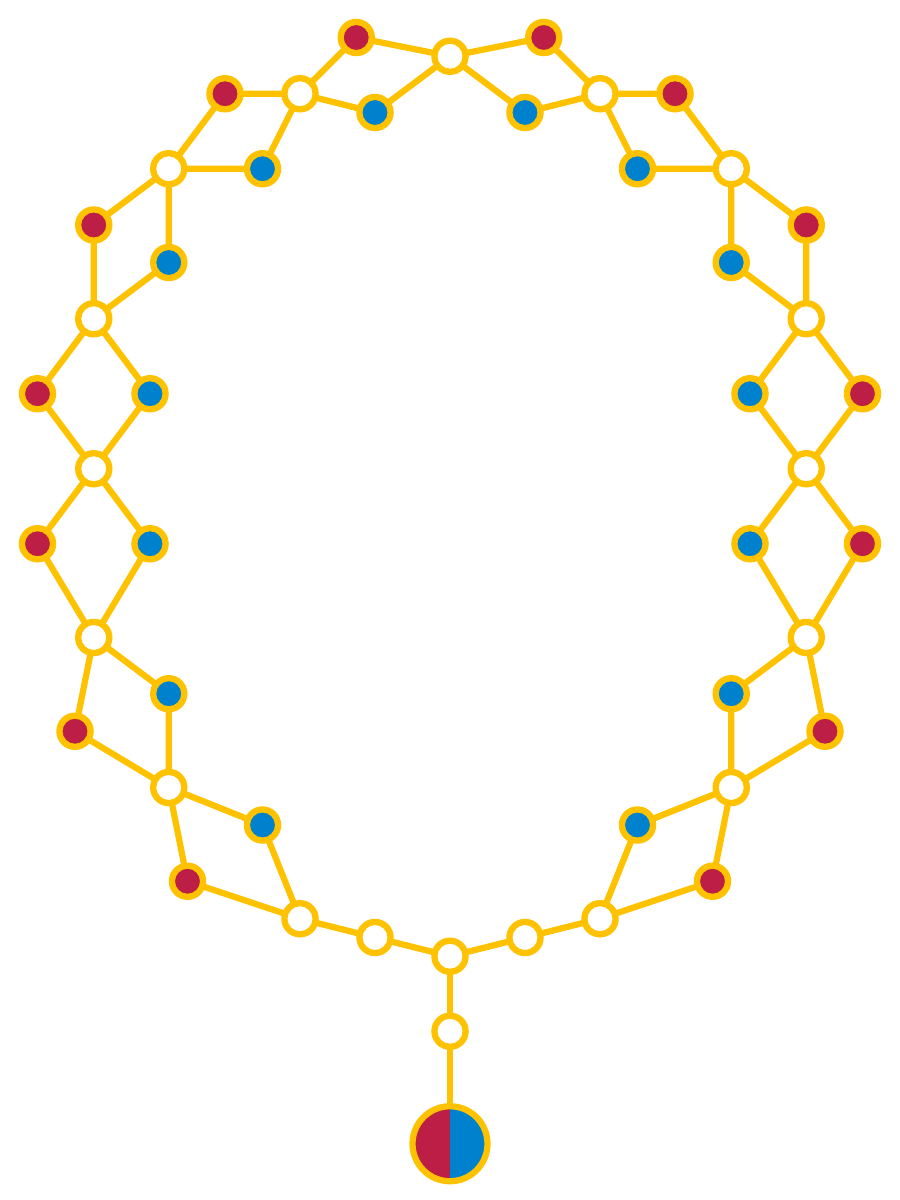}
\caption{The necklace graph $N_{14}$.}
\label{fig:necklace}
\end{figure}

Our family of triangle-free obstacles is built from a family of vertex-colored planar graphs, an example of which is depicted in \cref{fig:necklace}.
This graph has the overall form of a cycle, in which components in the form of two different types of subgraph are connected by pairs of \emph{terminal vertices}, colored white. Each component has two terminal vertices, each one shared with one of its two neighboring components in the cycle.
The two types of components are:
\begin{itemize}
\item Any number of \emph{links}, four-vertex cycles consisting of two non-adjacent white terminals, a red vertex, and a blue vertex.
\item The \emph{pendant}, a tree formed as the 1-subdivision of the claw $K_{1,3}$. All but one vertex are white; the remaining vertex, a leaf of the tree, is colored both red and blue, and we call it the \emph{jewel}. The two terminals are the other two leaves of the tree.
\end{itemize}
For each $i \in \N$, we define the necklace $N_i$ to be a necklace whose cycle of components consists of  $i$ links and one pendant. For instance the necklace in the figure can be denoted in this way as $N_{14}$. We observe that all planar drawings of this graph are topologically equivalent if neither the vertex colors nor the distinction between bounded and unbounded faces is considered. Each link bounds a quadrilateral face. There are two more faces, one polygonal face with $4+2i$ sides not containing the jewel, and a face with $8+2i$ sides (some of them repeated) containing the jewel. By convention we call these last two faces the \emph{inner face} and the \emph{outer face}, respectively, matching the way that they are drawn in the figure. Alternative planar embeddings may rearrange the colors of some of the vertices on these faces but not their topological structure.

We are now ready to describe our family of triangle-free obstacles. They are the graphs $\hat N_i$, obtained from a necklace $N_i$ by adding two more vertices: a red apex and a blue apex. The red apex is adjacent to all the red vertices of $N_i$, including the jewel. The blue apex is adjacent to all the blue vertices of $N_i$, including the jewel. These are the only additional adjacencies.

\begin{observation}
\label{obs:near-planar}
For each $i \in \N$, $\hat N_i$ is triangle-free, $K_{2,3}$-subgraph-free, and near-planar.
\end{observation}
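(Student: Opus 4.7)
The plan is to verify the three properties separately by direct inspection of the construction. For triangle-freeness, I would first observe that $N_i$ itself is triangle-free: every link is a $4$-cycle alternating white, red, white, blue; the pendant is a tree; and consecutive components meet in only a single white terminal, so no three vertices of $N_i$ induce a triangle. A triangle through an apex would require two of its neighbors (all of one color, with the jewel counting for both) to be adjacent in $N_i$. But the red vertex of a link is adjacent only to its two white terminals, the jewel is a leaf adjacent only to a white subdivision vertex, and the two apexes are nonadjacent, so no such triangle exists.

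For $K_{2,3}$-subgraph-freeness, I would use the criterion that $G$ contains $K_{2,3}$ as a subgraph if and only if some two vertices of $G$ share at least three common neighbors, and then show that in $\hat N_i$ no two vertices share more than two. Inside $N_i$ this reduces to a small case check: the only pairs with two common neighbors are the red--blue pair inside a single link and the two white terminals of a single link, while all other pairs share at most one. After adding the apexes, the common neighbors of the red apex and any other vertex $v$ are precisely the red neighbors of $v$, and one checks that no vertex of $\hat N_i$ has more than two red neighbors (each white terminal has at most two, the blue apex has only the jewel, the subdivision vertex adjacent to the jewel has only the jewel). The same argument works symmetrically for the blue apex, and the two apexes themselves share only the jewel.

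For near-planarity, I would exhibit a planar embedding of $\hat N_i$ minus the single edge joining the blue apex to the jewel. Starting from the planar embedding of $N_i$ described in the excerpt, in which the jewel lies on the boundary of the outer face and each link is drawn with its red vertex on the outer boundary and its blue vertex on the inner boundary, I place the red apex in the outer face and draw edges from it to the jewel and to each red vertex, all of which lie on the outer boundary; symmetrically, the blue apex is placed in the inner face and joined to each blue vertex. Since in each case the new vertex is inserted into a face whose boundary already contains all of its new neighbors, no crossings arise, and the only missing edge is the one from the blue apex to the jewel (the jewel being on the outer face while the blue apex sits in the inner face). The main piece of bookkeeping here---and the closest thing to an obstacle in the proof---is confirming that an embedding of $N_i$ really can be chosen so that all red vertices lie on one specified face and all blue vertices on the other, which is where the explicit description of the inner and outer faces of $N_i$ given just before the observation is used.
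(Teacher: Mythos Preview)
Your proposal is correct and follows essentially the same route as the paper: verify triangle-freeness by checking $N_i$ and then the apex adjacencies, handle $K_{2,3}$-freeness via common neighbors, and establish near-planarity by embedding $\hat N_i$ minus the blue-apex-to-jewel edge using the inner/outer face description. The only cosmetic difference is in the $K_{2,3}$ step: the paper disposes of the case of a $K_{2,3}$ lying entirely in $N_i$ by noting that the only four-cycles in $N_i$ are the links, whereas you do a direct common-neighbor count; both arguments are equally short.
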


\begin{proof}
Each component of $N_i$ is triangle-free, and the inner and outer face are too long to have a triangle, so $N_i$ is triangle-free. No two red vertices are adjacent, so there is no triangle involving the red apex, and symmetrically no two blue vertices are adjacent, so there is no triangle involving the blue apex.
At most two blue vertices of $N_i$ share a common white neighbor, so no $K_{2,3}$ subgraph can include the blue apex, and symmetrically with the red apex, so any $K_{2,3}$ subgraph of $\hat N_i$ can only exist within $N_i$ itself. However, the only four-cycles in $N_i$ are the links, which do not form $K_{2,3}$ subgraphs.

When the necklace is drawn as in the figure, with all blue vertices on the inner face and all red vertices on the outer face, the blue apex can be placed in the inner face and connected planarly to all blue vertices except the jewel. The red apex can be placed in the outer face and connected planarly to all red vertices including the jewel. Only one edge, from the blue apex to the jewel, is missing.
\end{proof}

\begin{figure}[t]
\centering\includegraphics[scale=0.3]{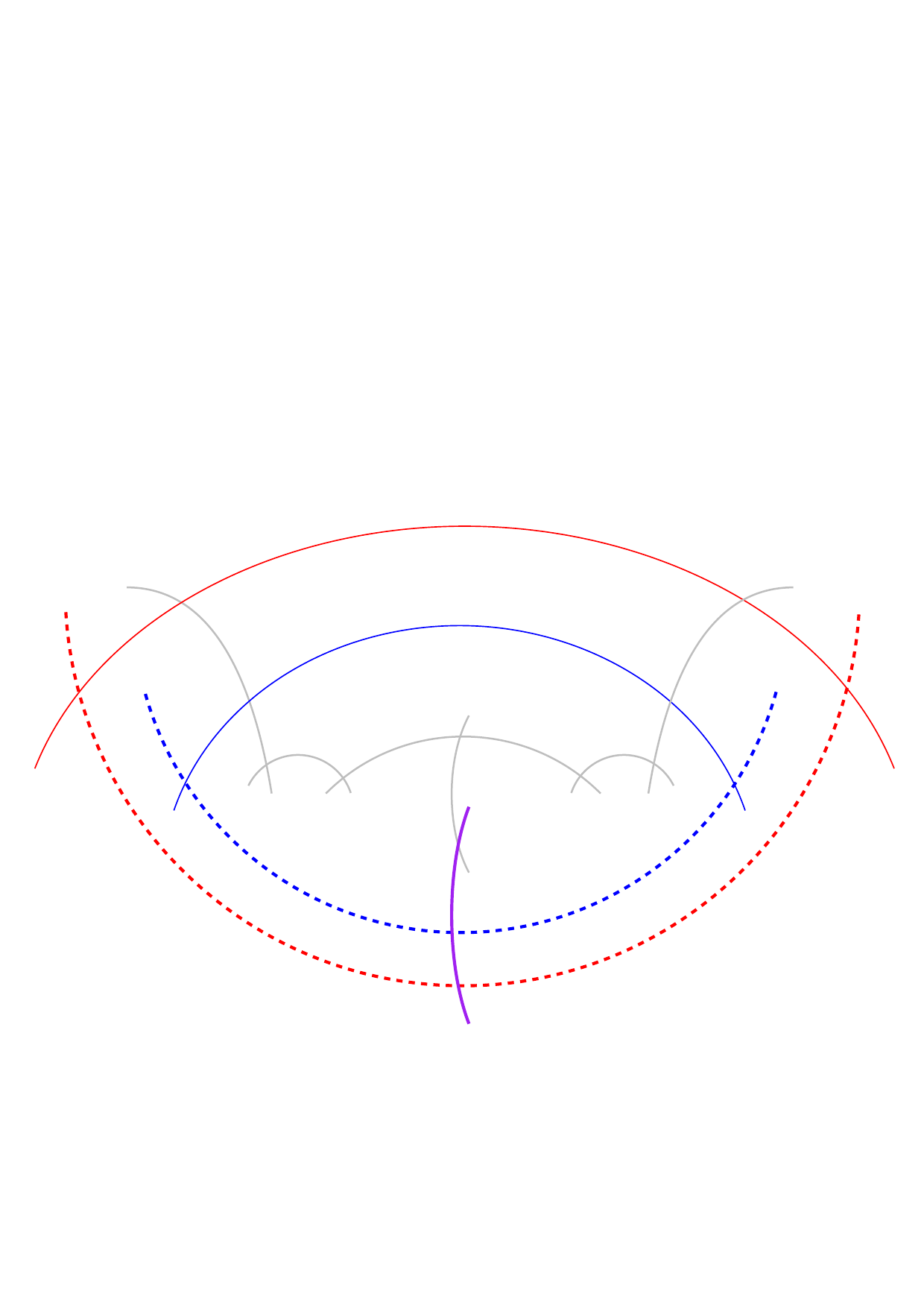}
\qquad
\centering\includegraphics[scale=0.3]{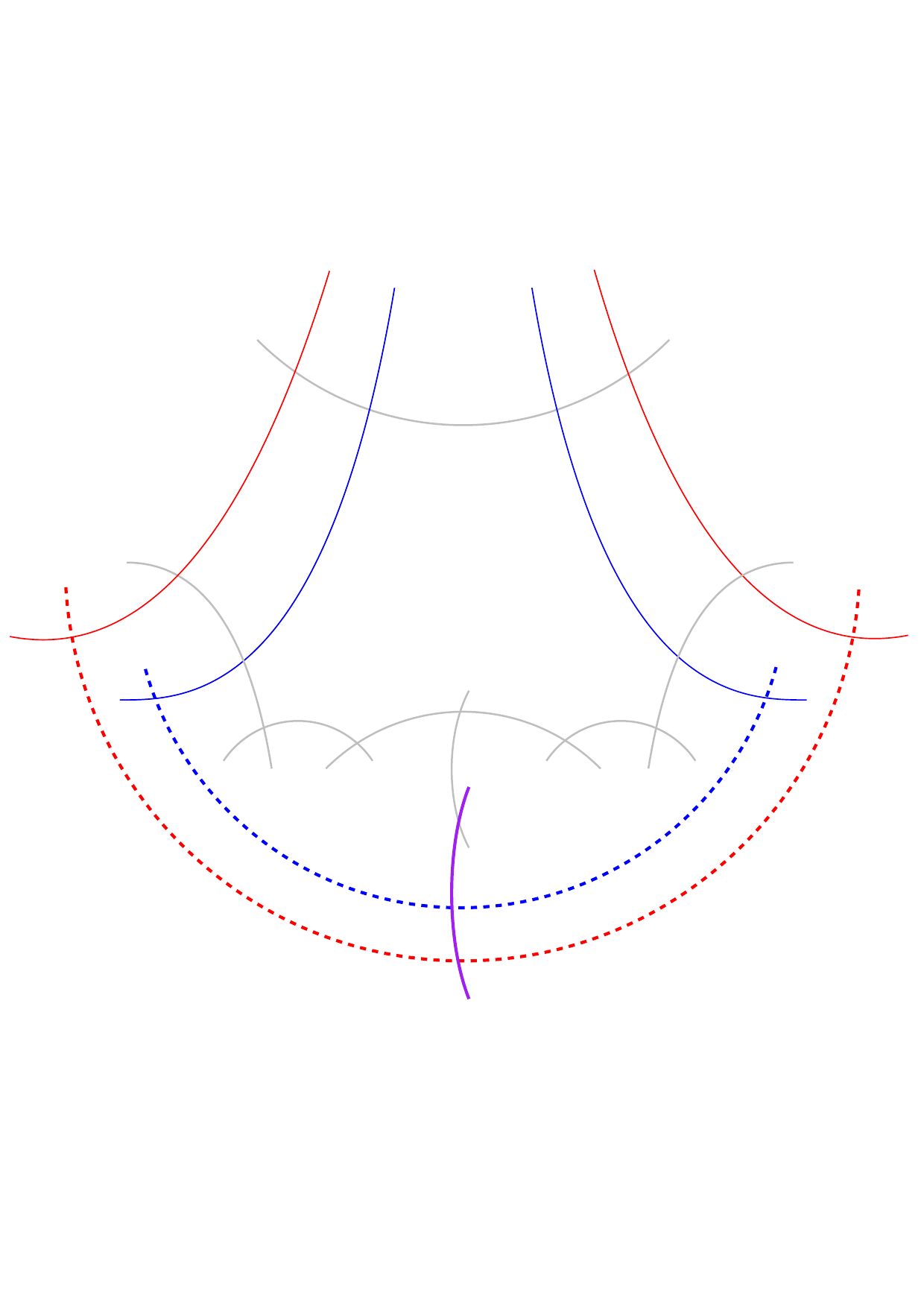}

\caption{String representations of $\hat N_1$ (left) and $\hat N_2$ (right). The light gray strings represent white vertices, and the solid red and blue strings represent red and blue vertices, respectively. The dashed red and blue strings represent the red and blue apexes, respectively, and the purple string represents the jewel.}
\label{fig:N-string}
\end{figure}

\begin{lemma}
\label{lem:nonstring}
For $i \in \N$, $\hat N_i$ is a string graph if and only if $i \in \{1,2\}$.
\end{lemma}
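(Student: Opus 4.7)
For $i\in\{1,2\}$, the plan is simply to exhibit concrete proper string representations, as displayed in \cref{fig:N-string}; verifying that the pictured curves realize exactly the adjacencies of $\hat N_i$ (and no extraneous ones) is routine by inspection.

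For the harder direction, suppose toward contradiction that some $\hat N_i$ with $i\ge 3$ admits a trimmed proper string representation. The plan is to extract a planar structure from the subgraph $N_i$ and then rule out any consistent placement of the apex and jewel strings. The first step exploits the fact that every edge of $N_i$ has a degree-two endpoint (red vertices, blue vertices, and pendant subdivision vertices all have degree at most two in $N_i$). Applying the argument of \cref{obs:d2planar} to a trimming of the $N_i$-strings gives a contact representation of $N_i$, hence a planar embedding of $N_i$. Combined with the earlier remark that the planar embedding of $N_i$ is topologically unique (up to relabeling of colors and the choice of inner versus outer face), this fixes a combinatorial picture: $N_i$ has $i$ quadrilateral faces (one per link) and two ``big'' faces, with the jewel $J$ lying in exactly one of them, call it $F_J$.

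The second step is to analyze how the strings for the red apex $R$, the blue apex $B$, and the jewel $J$ can be arranged relative to this contact representation. Because $R$ is adjacent only to $r_1,\dots,r_i$ and $J$, its string can only cross strings in $\{r_1,\dots,r_i,J\}$, which restricts which faces of the $N_i$-arrangement $R$ may reach: roughly, $R$ is confined to one big face together with short excursions into quadrilaterals via crossings of an $r_k$ whose big face agrees with $R$'s. A symmetric statement holds for $B$. Since both apexes must cross $J$, both should live in $F_J$; this pushes all $r_k$ and all $b_k$ to share $F_J$, contradicting the fact that in any planar embedding of a link's $4$-cycle the two side vertices $r_k$, $b_k$ lie on opposite big faces of $N_i$. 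The content of the argument is to show that this contradiction becomes unavoidable precisely when $i\ge 3$, matching the constructions for $i=1,2$.

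The main obstacle I anticipate is formalizing the second step. Two subtleties must be handled carefully. First, within the $\hat N_i$-representation, the $r_k$- and $J$-strings are not forced to be ``short'' in the $N_i$-contact sense: they may extend beyond the contact arrangement and route through quadrilateral faces, and this is exactly what the string representations of $\hat N_1$ and $\hat N_2$ exploit. Second, by the trimmed condition the $J$-string has its two endpoints at two of the three intersections with $\{R,B,s_3\}$, giving three combinatorial subcases for how $J$ is attached into the arrangement. The heart of the proof will be a case analysis showing that in each of these configurations, the number of links whose red and blue vertices can be simultaneously served by $R$, $B$, and their shared neighbor $J$ is at most two, yielding the desired contradiction for $i\ge 3$.
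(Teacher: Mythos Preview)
Your outline is close in spirit to the paper's proof but is missing two concrete ingredients that the paper supplies.

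First, the paper does not work with the two big faces of the $N_i$-embedding. Instead it singles out the unique red--white cycle $C_R$ and the unique blue--white cycle $C_B$ of $N_i$. These are the right separators because the jewel has no neighbour on either cycle, the blue apex has no neighbour on $C_R$, and the red apex has no neighbour on $C_B$. Hence each of these three strings is confined to a single side of the relevant cycle in the drawing $D$, and since all three must meet the jewel, all three end up in $F_{C_R}^+\cap F_{C_B}^+$. Your big-face formulation does not give this directly---both apexes \emph{can} cross the boundary of $F_J$ through their own colour's link vertices---which is exactly why you are forced into the ``excursions into quadrilaterals'' and the three-way case split on the jewel's endpoints. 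With $C_R$ and $C_B$ those complications largely evaporate: the only residual excursions are into link quadrilaterals, and the paper disposes of them by a one-line local modification (push the red vertex's string out to meet the apex inside the common region).

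Second, and more seriously, your endgame (``at most two links can be served'') is asserted but not argued, and nothing in your setup makes it evident. The paper converts the situation into a clean combinatorial statement: one obtains a $2i$-cycle whose vertices come in consecutive pairs $(v_j^1,v_j^2)$ coloured oppositely, together with monochromatic chords on one side of the cycle that must make each colour class connected. The contradiction for $i\ge 3$ is then a short interlacing argument: regardless of the colouring, one can always exhibit a red chord $r_1r_2$ and a blue chord $b_1b_2$ whose endpoints alternate $r_1,b_1,r_2,b_2$ around the cycle, so they cannot both be drawn on the same side without crossing. This reduction and the interlacing step are the heart of the proof, and your proposal does not contain them; without them the bound ``at most two'' has no justification.
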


\begin{proof}
String representations of $\hat N_1$ and $\hat N_2$ are shown in \cref{fig:N-string}. 

We now assume that $i \geq 3$. Suppose that $S$ is a string representation of $\hat N_i$. The induced subgraph $N_i$ obeys \cref{obs:d2planar}; obtain a trimmed representation of $N_i$ by removing the strings for the two apexes and trimming,
and let $D$ be a planar drawing of $N_i$, within a thickening of this trimmed representation, following \cref{obs:d2planar}.

Observe that $N_i$ contains a unique cycle $C_R$ comprised only of red and white vertices, and similarly $N_i$ contains a unique cycle $C_B$ comprised only of blue and white vertices. Then any cycle in $D$ separates the plane into two sides, restricting any string in $S$ that does not have a neighbor in the cycle to only one of these two sides. Thus, in $D$, $C_R$ separates the plane into two faces, which we denote by $F_{C_R}^+$ and $F_{C_R}^-$. Similarly, $C_B$ separates the plane into two faces, which we denote by $F_{C_B}^+$ and $F_{C_B}^-$. Since the jewel does not have a neighbor in $C_R$, its string in $S$ cannot pass through both $F_{C_R}^+$ and $F_{C_R}^-$; without loss of generality, we assume that it only passes through $F_{C_R}^+$. Similarly, we assume that the string of the jewel only passes through $F_{C_B}^+$ and does not pass through $F_{C_B}^-$. Thus, the jewel string lies entirely in $F_{C_R}^+ \cap F_{C_B}^+$.

We observe that the red apex only leaves $F_{C_R}^+ \cap F_{C_B}^+$ in order to enter sections of the plane that are bound by $4$-cycles of $D$, and each such cycle $C$ consists of one red vertex, one blue vertex, and two white vertices. In particular, as the red vertex is the only vertex in $C$ that is adjacent to the red apex, the red apex must enter and exit the section bound by $C$ through the red vertex. It is easy to see that if this is the case, then $S$ can be modified so that the red apex string does not enter the section bound by $C$, by extending the red vertex's string to intersect the red apex string within $F_{C_R}^+ \cap F_{C_B}^+$. Thus, without loss of generality, we may assume that the red apex string lies entirely in $F_{C_R}^+ \cap F_{C_B}^+$, and symmetrically that the blue apex string also lies entirely in $F_{C_R}^+ \cap F_{C_B}^+$. From here, we deduce that there exists a string representation of $\hat N_i$ if and only if there exists a graph $H_i$ and a coloring $c: V(H_i) \to \{R,B\}$ such that
\begin{enumerate}
    \item $V(H_i) = \{v_1^1,v_1^2,v_2^1,v_2^2,\ldots, v_n^1,v_n^2\}$, and
    \item $v_1^1v_1^2\ldots v_n^1v_n^2v_1^1$ is a cycle (which we will denote by $C_i$), and
    \item $c(v_j^1) \neq c(v_j^2)$ for all $1 \leq j \leq n$, and
    \item $c(x) = c(y)$ for all $xy \in E(H_i) \setminus E(C_i)$, and
    \item \label{lem-con:connectivity} $H_i[R_i] \setminus E(C_i)$ and $H_i[B_i] \setminus E(C_i)$ are both connected, where $R_i = \{v \in V(H_i) : c(v) = R\}$ $B_i = \{v \in V(H_i) : c(v) = B\}$, and
    \item \label{lem-con:planar} there exists a planar drawing of $H_i$ such that all edges lie entirely in the exterior of $C_i$.
\end{enumerate}

The intuition behind the equivalence between these two problems is that, if a string representation of $\hat N_i$ exists, then the string representation gives rise to an $H_i$ as above, with the apex strings defining the non-cycle edges of $H_i$. Conversely, an $H_i$ as described above gives rise to a string representation of $\hat N_i$, with the non-cycle edges of each color defining that color's respective apex string. This relationship is shown in \cref{fig:N4}.

Regardless of the choice of $H_i$ and $c$, there exist vertices $r_1,r_2,b_1,b_2 \in V(H_i)$ with $c(r_1) = c(r_2) = R$ and $c(b_1) = c(b_2) = B$ such that $r_1r_2, b_1b_2 \in E(H_i)$ and the cyclic order of these vertices on $C_i$ is $r_1,b_1,r_2,b_2$. It follows that in any planar drawing of $H_i$, it is impossible for both $r_1r_2$ and $b_1b_2$ to lie entirely in the exterior of $C_i$ without crossing. Thus, $\hat N_i$ is not a string graph when $i \geq 3$.
\end{proof}

\begin{figure}[t]
    \centering\includegraphics[scale=0.43]{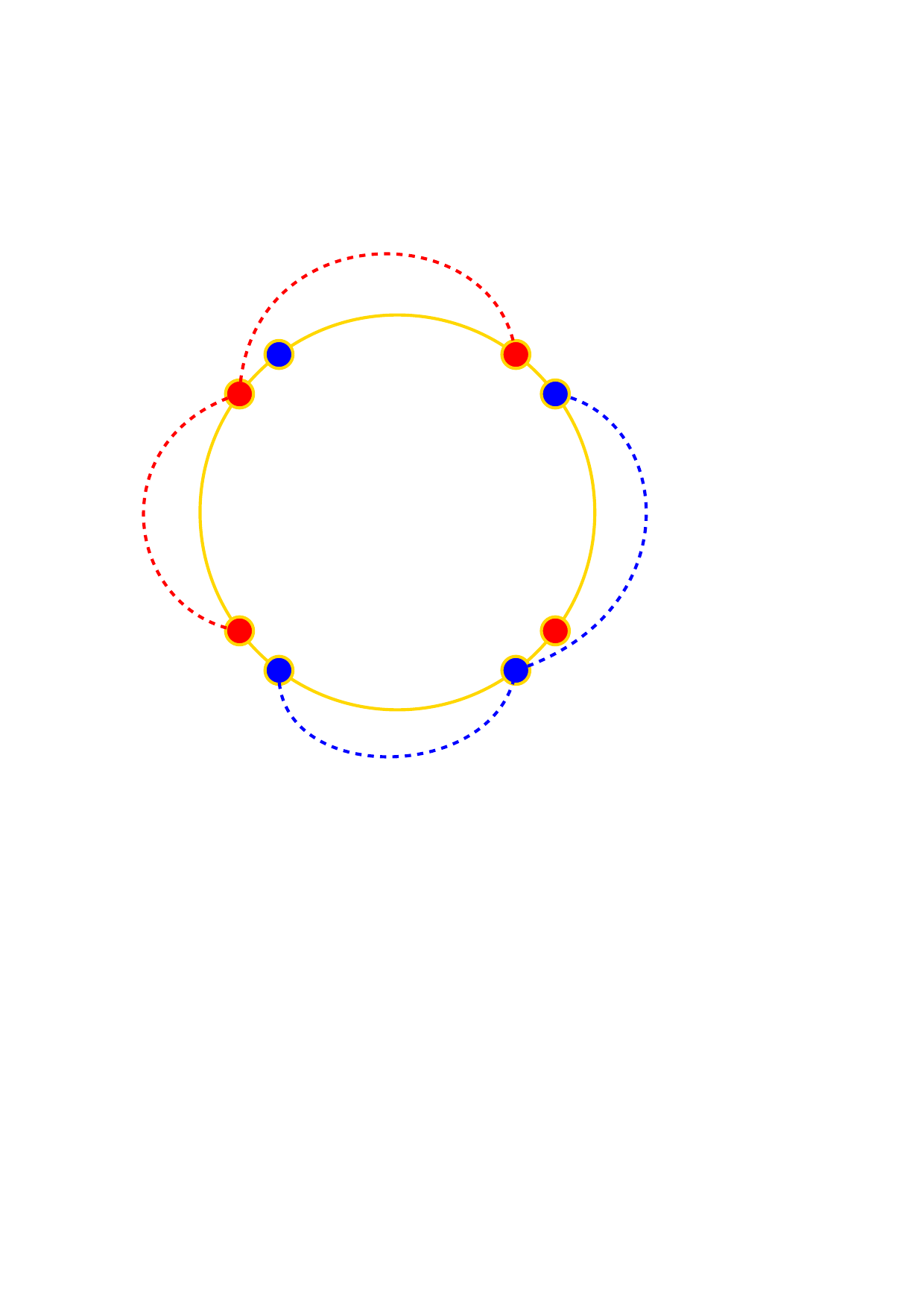}
    \qquad
    \centering\includegraphics[scale=0.3]{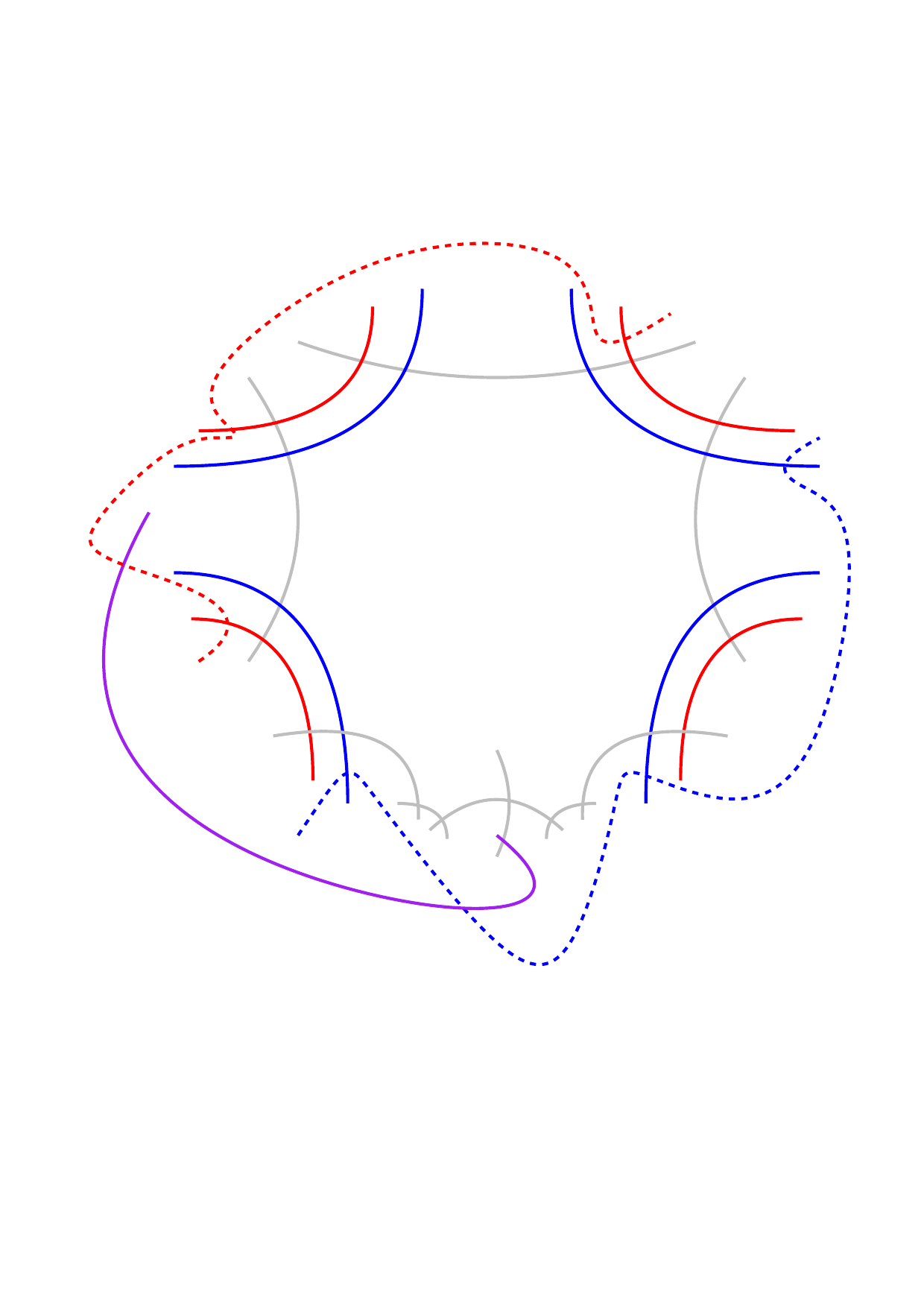}

    \caption{An example demonstrating the idea of the reduction in the proof of \cref{lem:nonstring}. On the left is an attempt at creating a satisfying $H_4$ as described in the proof; the only condition violated is Condition \ref{lem-con:connectivity}, that each of the monochromatic induced subgraphs (i.e. the subgraph on all red vertices and edges, and the one on all blue vertices and edges) is connected, as each is the disjoint union of a path on $3$ vertices and an isolated vertex. However, any attempt to add edges to satisfy Condition \ref{lem-con:connectivity} causes the graph to violate Condition \ref{lem-con:planar}. On the right is the corresponding attempt to draw a string representation of $\hat N_4$. Each apex string intersects (apart from the jewel) three of the four strings that it needs to in order to achieve a string representation of $\hat N_4$, and these three strings correspond to the path on three vertices in the monochromatic subgraph of the left graph, while the string not intersected by the apex represents the isolated vertex in the monochromatic subgraph of the left graph. }
    \label{fig:N4}
\end{figure}

\begin{lemma}
\label{lem:minimal}
For each $i \in \N$, every proper induced minor of $\hat N_i$ is a string graph.
\end{lemma}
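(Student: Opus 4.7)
The plan is to show that the graph obtained from $\hat N_i$ by either deleting a single vertex or contracting a single edge is a string graph. Since the class of string graphs is closed under induced minors, this will imply that every proper induced minor of $\hat N_i$ is a string graph. I would leverage the symmetries of $\hat N_i$---the red-blue color swap, the cyclic rotation of the $i$ links (fixing the pendant and apexes), and the reflection through the pendant---to reduce the work to a small set of orbit representatives for each kind of operation.

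For vertex deletions, I would split into three groups. Deleting the jewel yields a planar graph: in the near-planar drawing from \cref{obs:near-planar}, the only non-planar edge is incident to the jewel, so its removal leaves a fully planar drawing. Deleting an apex leaves the planar graph $N_i$ together with a single apex attached to all vertices of one color; combining the planar drawing of $N_i$ from \cref{obs:d2planar} with an explicitly routed string for the remaining apex (running through the inner face and crossing the cycle $C_R$ or $C_B$ only through colored strings, not white ones) produces a string representation. For deletions of non-jewel colored vertices, pendant interior vertices, or link terminals, I would construct a string representation directly by modifying the representation of $\hat N_2$ shown in \cref{fig:N-string} to accommodate the additional links; the key observation is that each such deletion either removes a required apex-to-colored adjacency or breaks one of the cycles $C_R, C_B$ in the necklace, freeing enough routing space for the extra links.

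For edge contractions, the edges of $\hat N_i$ fall, up to symmetry, into three orbits: edges inside a link, edges inside the pendant, and edges between an apex and a colored vertex. Contracting an edge of a link merges a colored vertex with an adjacent white terminal, producing a triangle but reducing the cyclic conflict on $C_i$; a string representation is obtained by locally modifying a drawing of the form of \cref{fig:N-string}. Contracting an edge of the pendant either shortens the pendant's tree structure or, when the contracted edge is incident to the jewel, fuses the jewel with a white neighbor, in either case collapsing part of the obstruction. Contracting an apex-to-colored edge identifies an apex with a vertex of one of the links and collapses one of the problematic cyclic alternations, again yielding a direct string representation.

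The main obstacle will be the edge-contraction case analysis, since contractions introduce triangles (deviating from the triangle-free structure) and modify the cyclic structure in non-obvious ways, so each orbit requires its own explicit string construction rather than a single unifying argument. The guiding principle is that the non-stringness of $\hat N_i$ for $i\geq 3$ established by \cref{lem:nonstring} comes entirely from the forced cyclic alternation of four vertices $r_1,b_1,r_2,b_2$ on $C_i$, whose required exterior edges must cross; any single vertex deletion or edge contraction either removes one of these four vertices, merges two of them, or fuses an apex into the cycle, and in each case that obstruction is destroyed and a string representation becomes possible.
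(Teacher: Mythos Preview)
Your high-level plan---reduce to a single vertex deletion or a single edge contraction and exploit the red/blue and reflective symmetries---is exactly what the paper does. The substantive difference is the base picture. The paper works uniformly from the near-planar drawing of $\hat N_i - e$ (with $e$ the jewel--blue-apex edge) from \cref{obs:near-planar} and the string representation obtained by tracing around vertices and half-edges; every case then amounts to explaining how the given deletion or contraction opens a route by which the one missing adjacency $e$ can be realized. This single organizing principle keeps each case to a couple of sentences.

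Your proposal instead switches base pictures case by case (the planar drawing of $N_i$, the explicit $\hat N_2$ representation, etc.). The step ``modify the representation of $\hat N_2$ to accommodate the additional links'' is the weakest: for large $i$ there is no evident way to splice many new link strings into that tight picture, and making it work would essentially rediscover the near-planar representation. Your edge-contraction orbits are also too coarse to drive a proof: inside the pendant the paper must distinguish the four edges on the terminal-to-terminal path from the two edges incident to the jewel (they behave quite differently), and your description of the apex--colored case (``identifies an apex with a vertex of one of the links'') does not cover the apex--jewel contractions, which require a separate argument routing both apex strings into a common link. Finally, the closing heuristic---that every operation removes or merges one of $r_1,b_1,r_2,b_2$ or fuses an apex into the cycle---is a useful intuition but not a proof: the reduction in \cref{lem:nonstring} is tailored to $\hat N_i$ itself, not to its induced minors, so destroying that particular configuration does not by itself certify stringness; you still owe an explicit representation in each case, and the paper's uniform ``add the missing edge $e$'' framework is what makes those constructions short.
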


\begin{proof}
We need only consider induced minors that perform a single vertex deletion or a single edge contraction in $\hat N_i$. Let $e$ be the edge connecting the jewel to the blue apex; we will start with the planar drawing of $\hat N_i-e$ obtained in \cref{obs:near-planar}, and its string representation obtained from this drawing by tracing a string around each vertex and its incident half-edges. We will show that in each possible vertex deletion $\hat N_i-v$ or edge contraction $\hat N_i/f$, it is possible to modify the string representation of $\hat N_i-e$ to obtain a string representation of $\hat N_i-v$ or $\hat N_i/f$.

\smallskip\noindent
For vertex deletions $\hat N_i-v$, we have the following cases:
\begin{itemize}
\item If $v$ is a white vertex belonging to both the inner and outer face of $N_i$, then deleting it and its string produces a string representation of $\hat N_i-v$ in which the inner and outer faces are no longer separated from each other. We can extend the string for the jewel to cross through the string for the red apex and through this gap, from the outer face to the inner face, where it can cross the string for the blue apex without being obstructed by any other string.
\item If $v$ is a red vertex, then deleting it and its string produces a string representation of $\hat N_i-v$ that breaks the red--white cycle which blocked the blue apex from passing through both the inner and outer faces. The string for the blue apex can be extended to cross through the blue vertex in the same link as $v$, into the outer face of $N_i-v$. It cannot cross the string for the red apex, but the string for the jewel can also be extended to cross the string for the red apex and meet the extended string for the blue vertex, adding the missing edge $e$ to the string representation of $\hat N_i-v$.
\item If $v$ is a blue vertex of $N_i$, then deleting it is symmetric to deleting a red vertex under a symmetry of $\hat N_i$ that swaps the two colors red and blue.
\item If $v$ is the blue apex or the jewel, then deleting it obviates the need to represent the missing edge $e$. The case where $v$ is the red apex is symmetric.
\item The only remaining vertex that could be deleted is the white vertex neighboring the jewel. If this vertex is deleted, the string for the jewel can be placed within any link. The strings for each apex can be extended through the vertex of the link of the same color, into the link, where they can separately intersect the string for the jewel.
\end{itemize}

\smallskip\noindent
For edge contractions  $\hat N_i/f$, we have the following cases:
\begin{itemize}
\item If $f$ is a blue--white edge of a link, contracting it produces a contracted vertex that is blue (incident to the blue apex) and that belongs to both the inner and outer face. The string for the blue apex can cross through the string for this contracted vertex, into the outer face, where it can be met by an extension of the string for the jewel, much like the case of deleting a red vertex. Contracting a red--white edge of a link is symmetric.
\item If $f$ is an edge connecting the blue apex to a blue vertex, contracting it produces a graph in which the blue apex has a white neighbor on both the inner and outer face. The string for the blue apex can extend through the string for this neighbor, into the outer face, where it can be met by an extension of the string for the jewel. Contracting an edge connecting the red apex to a red vertex is symmetric.
\item If $f$ is an edge on the four-edge path connecting the two terminals of the pendant, then contracting it produces a graph in which the degree-3 vertex of the pendant is adjacent to one terminal. The string for this degree-3 vertex can cross through the neighboring terminal, into its adjacent link. This allows the jewel and its neighbor to be placed within this link, instead of in the inner or outer face. The strings for each apex can cross through the strings for the red and blue vertices of this link, allowing the apex strings to reach the string for the jewel.
\item If $f$ is one of the two remaining edges of the pendant, then contracting it makes the jewel adjacent to the white degree-3 vertex, which belongs to both the inner and outer face. The string for the jewel can cross through the string for this white neighbor, into the inner face, where it can be intersected by the string for the blue apex.
\item If $f$ is the edge connecting the red apex to the jewel, then after it is contracted and the two strings for the apex and jewel are combined into a single string, this combined string still needs to intersect the string for the blue apex. It can do so by crossing the string for any red vertex into any link, where it can be met by an extension of the blue apex string that crosses the string for the blue vertex of the same link. The case where $f$ connects the blue apex to the jewel is symmetric. 
\end{itemize}

\smallskip\noindent
Each case produces a valid string representation, from which the result follows.
\end{proof}

\begin{theorem}
There are infinitely many triangle-free, $K_{2,3}$-subgraph-free, near-planar graphs that are minimal as non-string graphs in the induced minor order.
\end{theorem}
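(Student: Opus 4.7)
The plan is to assemble the three preceding results into the theorem, taking the family $\{\hat N_i : i \geq 3\}$ as the witness. By \cref{obs:near-planar}, each member of this family is triangle-free, $K_{2,3}$-subgraph-free, and near-planar, so the structural requirements of the theorem are immediate. By \cref{lem:nonstring}, each $\hat N_i$ with $i \geq 3$ fails to be a string graph, and by \cref{lem:minimal}, every proper induced minor of $\hat N_i$ is a string graph. Together these two statements say precisely that $\hat N_i$ is minimal in the induced minor order among non-string graphs, which is exactly the notion of a string graph obstacle used throughout the paper.

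To upgrade ``obstacle for each $i \geq 3$'' to ``infinitely many obstacles,'' I would next argue that the family contains infinitely many isomorphism classes. The necklace $N_i$ is obtained from $N_{i-1}$ by inserting one additional link, which introduces strictly more vertices; since the two apexes are the same for every $i$, the vertex count $|V(\hat N_i)|$ is a strictly increasing function of $i$. Consequently the graphs $\{\hat N_i : i \geq 3\}$ are pairwise non-isomorphic, and the family is genuinely infinite.

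Since every substantive part of the argument has already been discharged by the preceding three results, there is no real obstacle remaining; the proof of the theorem is just a packaging step. The conceptual difficulty of the section is concentrated in \cref{lem:nonstring} (the topological reduction using the monochromatic cycles $C_R$ and $C_B$ and the forced cyclic order of the four vertices $r_1, b_1, r_2, b_2$ on $C_i$) and in the case analysis of \cref{lem:minimal}, both of which are already in hand.
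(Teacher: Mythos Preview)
Your proposal is correct and matches the paper's proof essentially verbatim: the paper also takes $\{\hat N_i : i \ge 3\}$, invokes \cref{obs:near-planar}, \cref{lem:nonstring}, and \cref{lem:minimal} in turn, and notes that the graphs are pairwise non-isomorphic because they have different numbers of vertices. There is nothing to add.
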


\begin{proof}
The graphs $\hat N_i$ for $i\ge 3$ are non-isomorphic (they have different numbers of vertices), triangle-free, $K_{2,3}$-free, and near-planar by \cref{obs:near-planar},
non-string graphs by \cref{lem:nonstring}, and minimal with this property for the induced minor order by \cref{lem:minimal}.
\end{proof}

\section{Subcubic Obstacles} \label{sec:degree}

\begin{figure}[t]
\centering\includegraphics[width=0.4\textwidth]{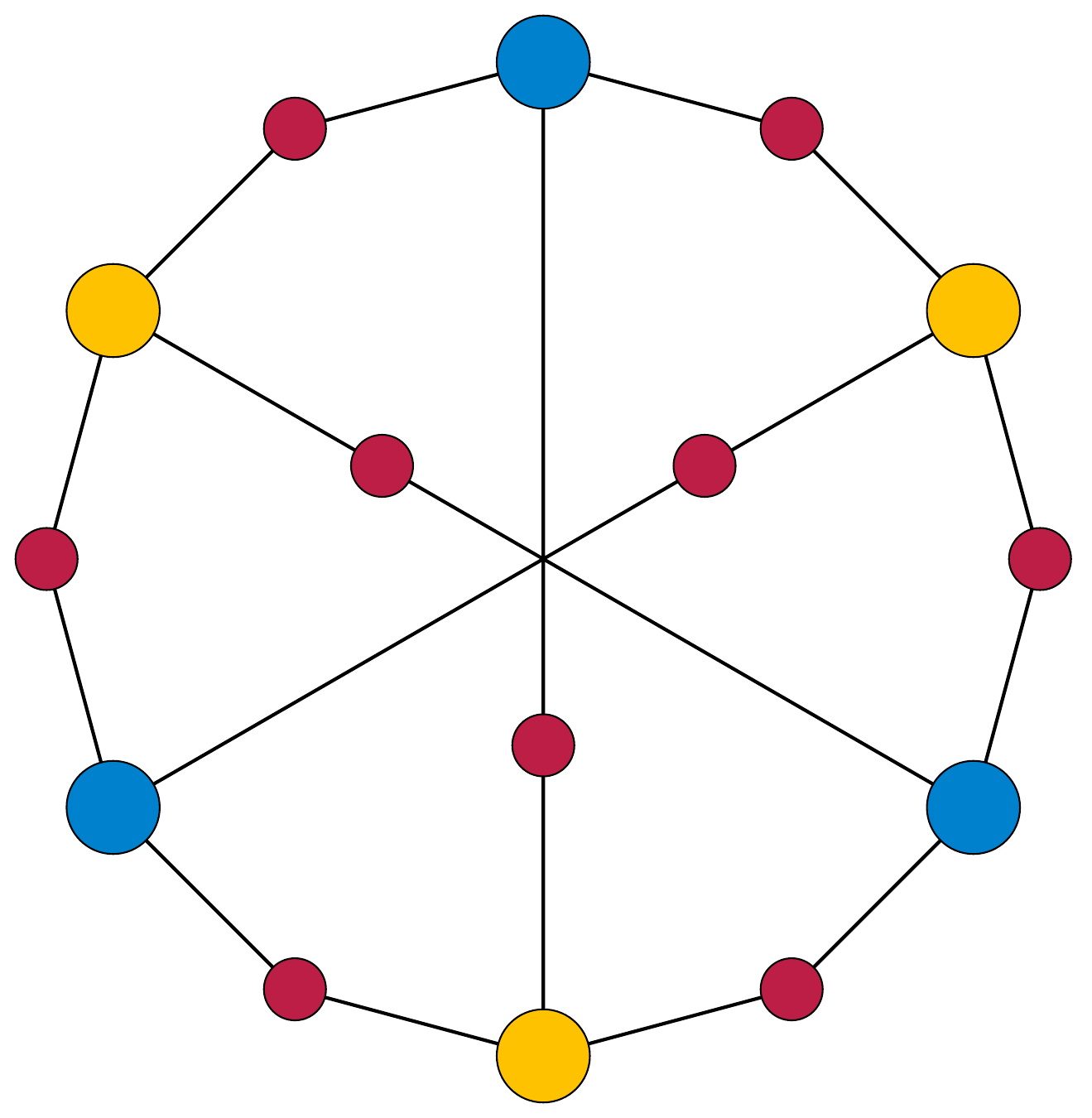}
\caption{The 1-subdivision of $K_{3,3}$, the only previously known subcubic obstacle to string graphs.}
\label{fig:1sub-k33}
\end{figure}

A graph $G$ is \emph{subcubic} if its maximum degree is at most three, and a \emph{cubic matching of $G$} is a matching $M$ using only cubic edges (edges that have two degree-3 endpoints). The only previously known subcubic obstacle to string graphs was the 1-subdivision of $K_{3,3}$ (\cref{fig:1sub-k33}), which has girth eight. In this section we present new subcubic obstacles to string graphs including one that contains a triangle. Additionally, we limit the search for other subcubic obstacles by proving that they do not have arbitrarily large girth.

We do not know whether the string graphs have a finite or infinite number of subcubic obstacles. It is known that there exist infinite antichains of bounded-degree graphs in the induced minor order~\cite{MatNesTho-CMUC-88}, but even this much appears not to have been known for maximum degree three. In \cref{sec:d3-antichain} we describe an infinite antichain of cubic toroidal graphs, and relate them to string graphs, however without determining whether they can be used to construct additional subcubic obstacles.

\begin{figure}[t]
\centering\includegraphics[scale=0.2]{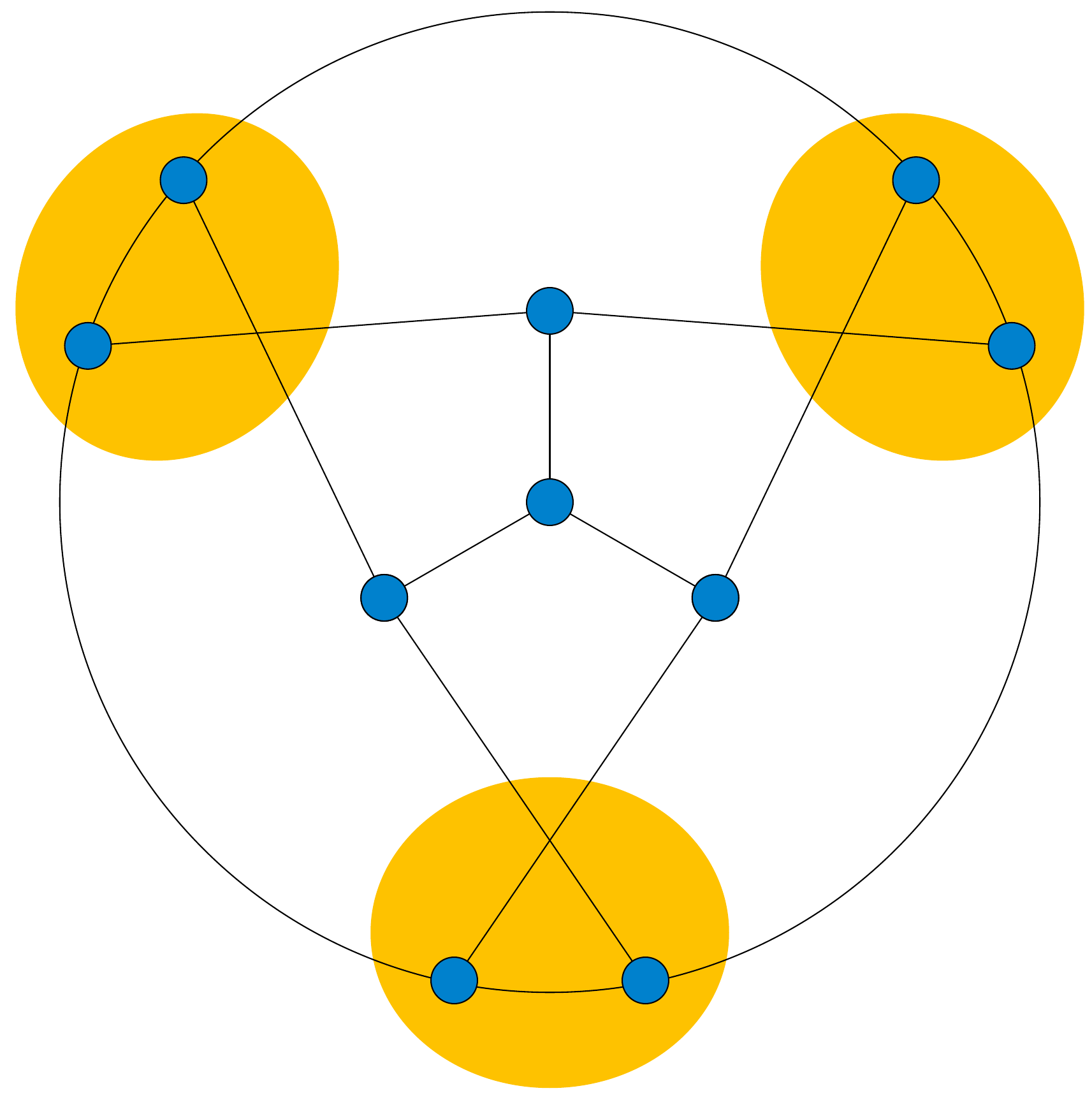}
\qquad
\centering\includegraphics[scale=0.2]{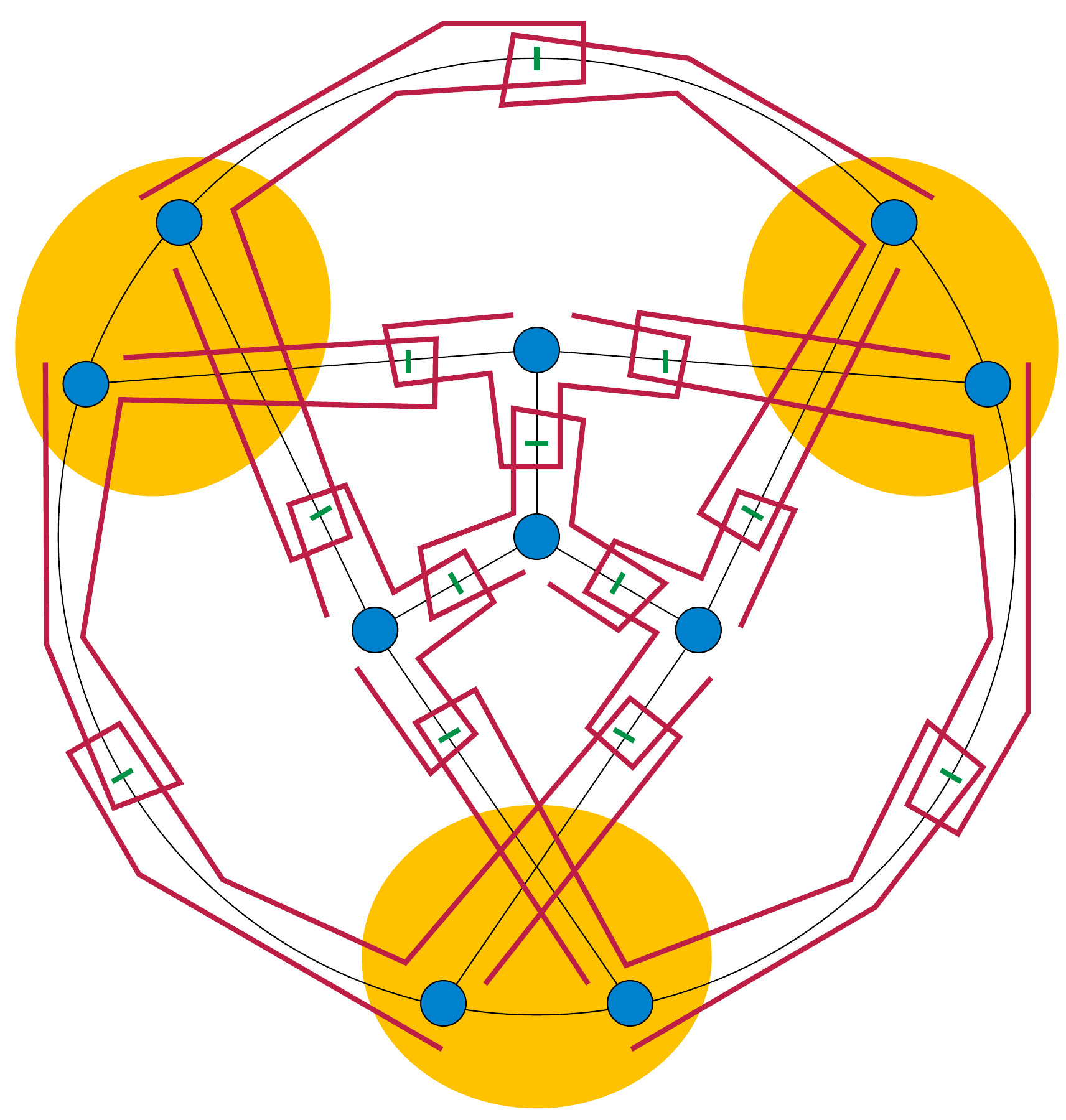}

\caption{Left: A three-edge matching (shaded ovals) in the Petersen graph whose contraction produces a planar graph (all crossings are inside ovals). Right: A string graph representation of the Petersen graph constructed following the proof of \cref{thm:subcubic-string}. The green marks subdivide each unmatched edge into half-edges, which the red strings trace around.}
\label{fig:petersen}
\end{figure}

We base many of our results in this section on the following characterization of subcubic string graphs in terms of contractions of matchings.

\begin{theorem}
\label{thm:subcubic-string}
If a graph $G$ has a matching $M$ such that the contraction of the matched edges, $G / M$, is planar, then $G$ is a string graph.
Every subcubic string graph has such a matching that is cubic.
\end{theorem}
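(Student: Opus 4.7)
My plan is to prove the two directions separately; the reverse direction, which must actually extract a cubic matching from a string representation, carries most of the difficulty.

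For the forward direction I would start from a planar drawing of $G/M$, subdivide each of its edges at its midpoint, and turn the drawing into a string representation of $G$ by "uncontracting" each matched vertex. An unmatched vertex $x$ receives a string $S_x$ that is a small neighborhood of $v_x$ in the drawing, extended along each incident half-edge up to its midpoint, following the thickening idea from \cref{obs:d2planar}. For a matched edge $xy \in M$, I would take a small disk around $v_{xy}$, label each incident half-edge of $G/M$ according to whether the corresponding edge of $G$ originates at $x$ or at $y$, and draw two simple curves inside the disk that cross each other, one passing through all $x$-labeled half-edges and the other through all $y$-labeled ones. A routine check against the three cases $xx' \in M$, $xx' \in E(G) \setminus M$, and $xx' \notin E(G)$ shows that $S_x \cap S_{x'} \neq \emptyset$ exactly when $xx' \in E(G)$, as illustrated in the Petersen example of the right side of the figure.

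For the reverse direction I would start with a trimmed string representation of a subcubic string graph $G$ and read off a cubic matching $M$ from the combinatorics of string endpoints. The structural engine is the fact that in a trimmed representation each string endpoint is a crossing with another string, so for a cubic vertex $x$ the two endpoints of $S_x$ account for at most two of its three neighbors, forcing the remaining adjacency to be realized by a crossing interior to $S_x$. After a normalization step ensuring each pair of adjacent strings meets in a single point and that the two endpoints of each $S_x$ lie on two distinct neighbor-strings, I would define $M$ as the set of cubic edges $xy$ whose unique intersection point is interior to both $S_x$ and $S_y$. The endpoint count then forces each cubic vertex to be incident to at most one such edge, so $M$ is a cubic matching.

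To show $G/M$ is planar I would merge each matched pair $\{S_x, S_y\}$ at their shared interior crossing into a single tree-shaped curve — a "plus" with four arms. In the resulting family of curves, every crossing absorbed by a merge is now internal to a single curve, and every remaining intersection between two distinct curves is at an endpoint of one of them. Thickening each such curve to a topological disk and drawing the vertices and edges of $G/M$ inside these disks, in the spirit of \cref{obs:d2planar}, yields a planar embedding. The main obstacle is the reverse direction: the normalization step that rules out pathological configurations of endpoints (coincident endpoints, both endpoints of a string landing on the same other string leaving two candidate interior partners, or a pair of adjacent strings crossing multiple times) must be carried out carefully enough that the "both-interior" definition of $M$ is unambiguous and that the subsequent merge genuinely eliminates all transverse crossings between distinct curves.
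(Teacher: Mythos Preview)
Your proposal is correct and follows essentially the same route as the paper: build strings from a planar drawing of $G/M$ by tracing half-edges and splitting each contracted vertex into two crossing curves, and in the reverse direction read off the matching from the proper crossings in a trimmed representation and then merge matched pairs to get a contact representation of $G/M$. The normalization you flag as the main obstacle is in fact unnecessary: a trimmed representation already forces the two endpoints of each string onto \emph{distinct} neighbor-strings (otherwise the string could be shortened at one end while still meeting the same neighbors), and the paper avoids your single-intersection requirement entirely by taking the union $S_x\cup S_y$ for each properly-crossing pair rather than insisting on a ``plus'' shape, which gives a contact representation of $G/M$ regardless of how many times $S_x$ and $S_y$ cross.
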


\begin{proof}
For the first claim of the theorem, suppose that $G$ has a matching $M$ for which $G/M$ is planar; we show that $G$ is a string graph. Find a planar drawing of $G/M$, and expand each vertex in the drawing, coming from a contracted edge in $M$, from a point into a topological disk, disjoint from the remaining components of the drawing. \cref{fig:petersen} depicts an example, with these expanded disks shown in yellow. Let $D$ be the resulting vertex-expanded drawing. Place a point arbitrarily on each edge of $D$, disjoint from the endpoints of the edge, separating the edge into two topological half-edges. Represent $G$ as a string graph in which each unmatched vertex is represented by a string that traces around the vertex and its incident half-edges in $D$.
For each vertex $v$ in a matched edge $vw$, represent $v$ as a string which stays within the expanded disk representing the contracted vertex for $vw$, crossing the string for $w$, and extends beyond this expanded disk only to trace around each half-edge connecting this contracted vertex to a neighbor of $v$ in $G$. The resulting system of strings intersect within the expanded disks for each contracted edge, and near the subdivision point along each uncontracted edge representing an adjacency in $G/M$; there are no other crossings. Thus, these strings form a string representation of $G$. 

For the second claim of the theorem, suppose that $G$ is a subcubic string graph. We must show that there exists a cubic matching $M$ such that $G/M$ is planar. Consider any trimmed representation of $G$. Each string in this representation, representing a vertex $v$, must touch strings for two distinct neighbors of $v$ at its two endpoints; it can intersect at most one other string than these two neighbors, a third neighbor $w$ of $v$, at proper crossings. If the string for $v$ crosses the string for $w$ in this way, for the same reason, $w$ can have no other crossings than with $v$. Thus, the pairing of vertices of $G$ whose strings cross each other is a matching $M$. When the two strings for each matched pair are replaced by their union, a connected subset of the plane, the result is a contact representation of $G/M$, by sets (curves and unions of two curves) for which at most two sets intersect at any point, which is therefore planar as in \cref{obs:d2planar}. Furthermore, two strings can only intersect at a proper crossing if both vertices have degree three. Thus, every edge of $M$ is adjacent to two vertices of degree three, so $M$ is a cubic matching.
\end{proof}

\subsection{New Subcubic Obstacles}

As an example, this characterization can be used to show that the 1-subdivision of $K_{3,3}$ (\cref{fig:1sub-k33}) is a minimal non-string graph.
It is not a string graph by \cref{obs:d2planar}. If any vertex is deleted, the remaining subgraph becomes planar, because it forms a subdivision of a vertex deletion or edge deletion of $K_{3,3}$. And if any of its edges
is contracted, replacing a two-edge subdivided path $uvw$ by the single edge $uw$,
then the matching consisting only of edge $uw$ meets the conditions of \cref{thm:subcubic-string}: contracting it produces a subdivision
of the planar wheel graph obtained by contracting one edge of $K_{3,3}$. Thus, all induced minors of the 1-subdivision of $K_{3,3}$ are string graphs.

We use this characterization to show the existence of several novel subcubic obstacles, depicted in \cref{fig:subcubic-obstacles}. The first is $KT_3$, which is constructed from the 1-subdivision of $K_{3,3}$ by adding an edge between two subdivision vertices at distance two from each other.

\begin{figure}[t]
    \centering\includegraphics[width=0.4\linewidth]{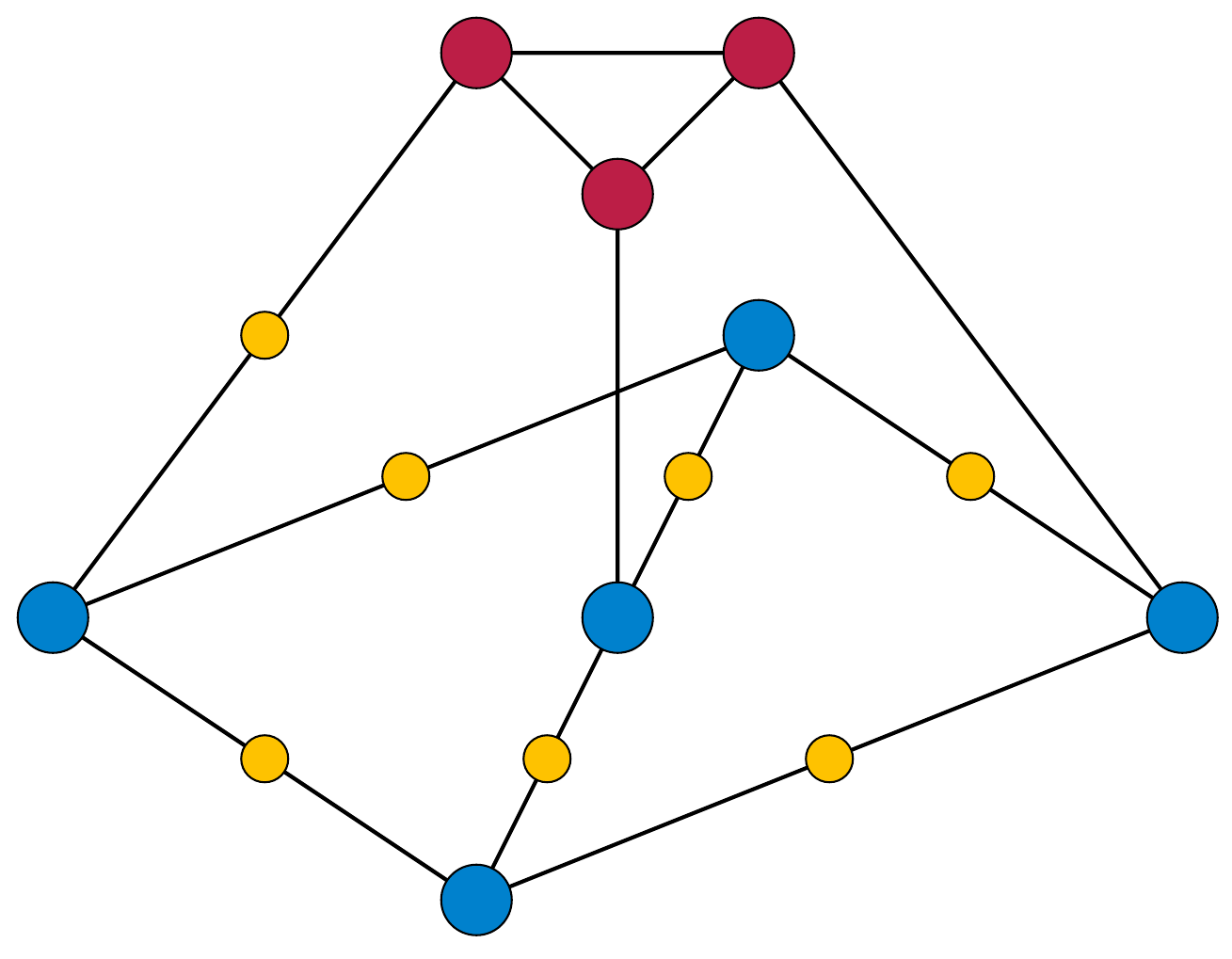}
    \qquad
    \centering\includegraphics[width=0.4\linewidth]{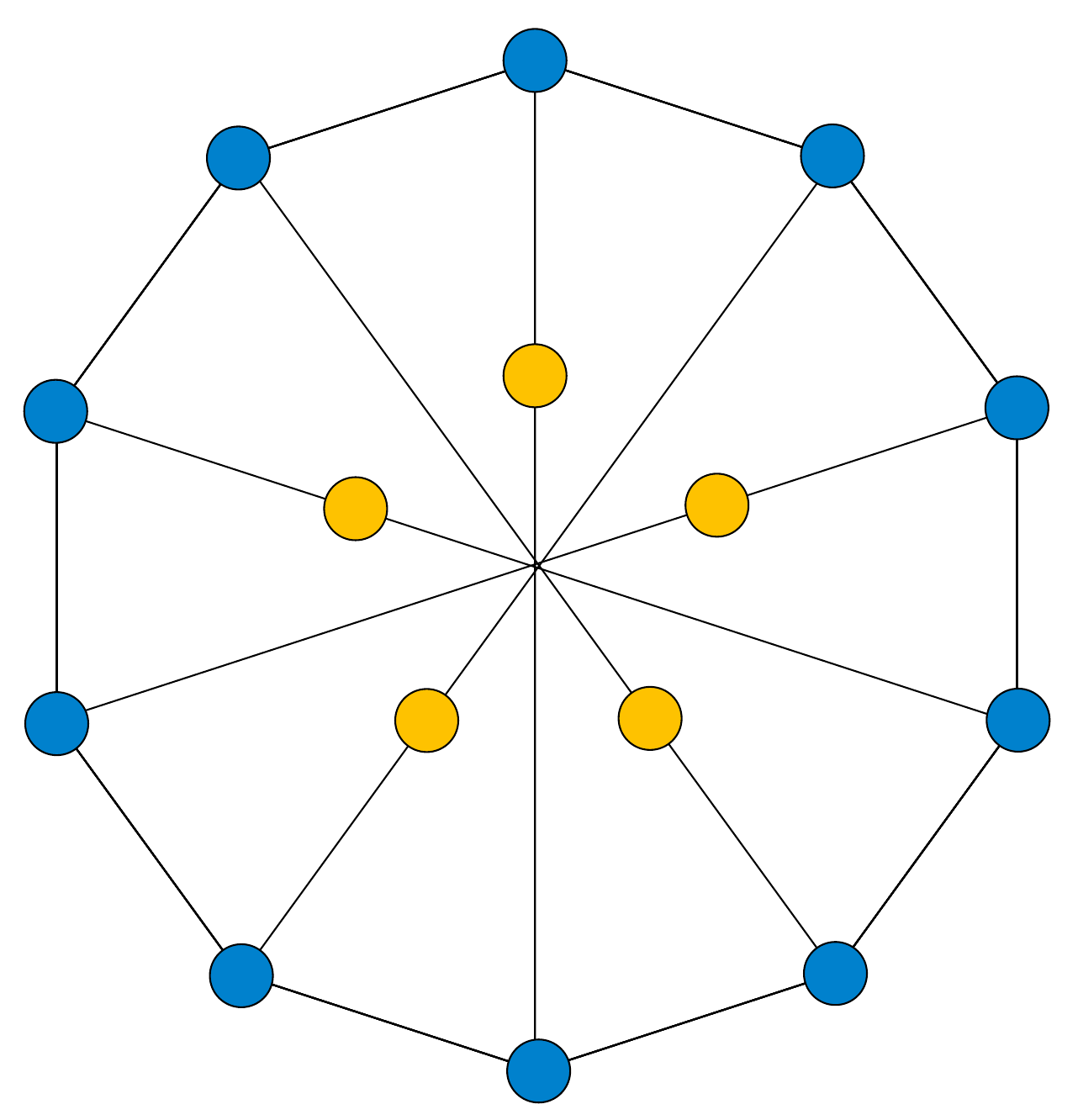}
    \caption{Subcubic obstacles $KT_3$ (left) and $SM_{10}$ (right).}
    \label{fig:subcubic-obstacles}
\end{figure}

\begin{proposition} \label{prop:KT3}
    $KT_{3}$ is a minimal non-string graph.
\end{proposition}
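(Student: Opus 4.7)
Label $KT_3$ so that the underlying $K_{3,3}$ has sides $\{a_1,a_2,a_3\}$ and $\{b_1,b_2,b_3\}$, the subdivision vertex on $a_ib_j$ is $s_{ij}$, and the added edge is $s_{11}s_{12}$. The plan is to invoke \cref{thm:subcubic-string} in both directions.

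\medskip\noindent\emph{Non-stringness.}
The degree-$3$ vertices of $KT_3$ are the six $K_{3,3}$-vertices together with $s_{11}$ and $s_{12}$, so the cubic edges form the short list $\{s_{11}a_1,s_{11}b_1,s_{11}s_{12},s_{12}a_1,s_{12}b_2\}$. Since every pair of non-adjacent cubic edges has to use two of the five endpoints $a_1,b_1,b_2,s_{11},s_{12}$, an exhaustive enumeration produces at most nine cubic matchings, each of size at most two. For each such $M$ I plan to exhibit a $K_{3,3}$ minor in $KT_3/M$ by taking the branch sets $\{a_1\},\{a_2\},\{a_3\},\{b_1\},\{b_2\},\{b_3\}$, each enlarged to absorb the merged partner it receives in $M$ (for instance, if $M$ contains $s_{11}b_1$, then $b_1$'s branch absorbs $s_{11}$), and routing the nine internally disjoint connecting paths through the undisturbed subdivision vertices $s_{13}$ and $\{s_{ij}:i\in\{2,3\}\}$, using the edge $s_{11}s_{12}$ itself when it is not contracted. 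The only subtle case is $M=\{s_{11}s_{12}\}$, where the merged vertex $v$ has degree three but must serve two of the three paths from $a_1$; here I will absorb $v$ into $a_1$'s branch, so the three paths from $\{a_1,v\}$ to $b_1,b_2,b_3$ use the edges $vb_1$, $vb_2$, and the path $a_1s_{13}b_3$. Consequently every $KT_3/M$ is non-planar and \cref{thm:subcubic-string} yields that $KT_3$ is not a string graph.

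\medskip\noindent\emph{Minimality.}
Every proper induced minor of $KT_3$ arises from a single vertex deletion or a single edge contraction, and the automorphisms of $KT_3$ (swapping $s_{11}\leftrightarrow s_{12}$ together with $b_1\leftrightarrow b_2$, and independently swapping $a_2\leftrightarrow a_3$) reduce this to a bounded list of orbit representatives. For vertex deletions I will show directly that $KT_3-v$ is planar, so the empty matching suffices via \cref{thm:subcubic-string}: deleting a $K_{3,3}$-vertex leaves the $1$-subdivision of $K_{3,3}-v$ together with the extra edge $s_{11}s_{12}$, and one can choose the planar embedding so that $s_{11}$ and $s_{12}$ are on a common face incident to $a_1$; deleting $s_{13}$ is analogous; deleting $s_{11}$, $s_{12}$, or any other subdivision vertex leaves a subgraph of the $1$-subdivision of $K_{3,3}-e$, which is planar. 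For edge contractions I will exhibit, for each orbit representative $e$, a matching $M$ of $KT_3/e$ whose further contraction yields the $1$-subdivision of a small planar graph such as $K_{1,2,2}$ (the $4$-wheel). The prototype is $e=s_{11}a_1$: using $M=\{ub_1\}$ in $KT_3/e$, with $u$ the image of $s_{11}a_1$, collapses $\{s_{11},a_1,b_1\}$ to a single vertex and produces the $1$-subdivision of $K_{3,3}/\{a_1b_1\}\cong K_{1,2,2}$, which is planar.

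\medskip\noindent\emph{Main obstacle.}
The hardest case will be the contraction of the extra edge itself, $e=s_{11}s_{12}$, because after it the merged vertex has three $b$-side neighbors and the $K_{3,3}$ minor persists against any single further contraction. I plan to resolve this by using a larger matching in $KT_3/\{s_{11}s_{12}\}$, for example $\{vb_1,a_2s_{22},a_3s_{33}\}$, that simultaneously collapses the $a_1$-corner with $b_1$ and identifies each of $a_2,a_3$ with a subdivision vertex on an $a_ib_j$-path, so the resulting graph becomes a $1$-subdivision of a small planar graph; verification of planarity, by exhibiting an embedding or by ruling out $K_{3,3}$ and $K_5$ branch-set decompositions, will close the case. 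Elsewhere, the orbit-by-orbit verification is routine once the correct matching has been chosen.
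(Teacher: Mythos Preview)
Your plan is correct and follows essentially the same route as the paper: both directions of \cref{thm:subcubic-string} are used, with an exhaustive check of the (at most nine) cubic matchings for non-stringness and a case-by-case planarizing matching for each single deletion or contraction; your labelled-vertex bookkeeping is simply a coordinate version of the paper's colour-based case split, and your proposed matching $\{vb_1,a_2s_{22},a_3s_{33}\}$ for the $e=s_{11}s_{12}$ case does planarize (the suppressed graph is $K_{1,2,2}$). One small slip: deleting a subdivision vertex $s_{ij}$ with $i\in\{2,3\}$ does \emph{not} leave a subgraph of $\mathrm{sub}_1(K_{3,3}-e)$, since the extra edge $s_{11}s_{12}$ survives; handle those cases exactly as you handle $s_{13}$, by embedding $\mathrm{sub}_1(K_{3,3}-a_ib_j)$ with $s_{11},s_{12}$ on a common face.
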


We note that additional obstructions can be obtained in a similar fashion by replacing multiple vertices of the $1$-subdivision of $K_{3,3}$ with triangles (rather than just one).

Our second subcubic obstacle, $SM_{10}$, is not obtained from $K_{3,3}$, and its existence demonstrates that the 1-subdivision of $K_{3,3}$ is not the only triangle-free subcubic obstacle. It may be formed from the 10-vertex Möbius ladder~\cite{GuyHar-CMB-67} by subdividing the five ``rungs'' of the ladder; it consists of a 10-cycle of degree-3 vertices, together with five two-edge paths connecting opposite pairs of vertices of the 10-cycle.

\begin{proposition} \label{prop:SM10}
$SM_{10}$ is a minimal non-string graph.
\end{proposition}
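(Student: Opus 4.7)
The plan is to apply \cref{thm:subcubic-string} in both directions. First, observe that the only cubic edges of $SM_{10}$ are the ten edges of the outer $10$-cycle, since each of the five rung midpoints has degree two. By the second clause of \cref{thm:subcubic-string}, showing $SM_{10}$ is not a string graph reduces to showing $SM_{10}/M$ is non-planar for every matching $M$ in the outer cycle. Contracting $M$ and then suppressing the degree-two rung midpoints preserves planarity and produces $M_{10}/M$, where $M_{10}$ is the ten-vertex M\"obius ladder. The graph $M_{10}/M$ consists of a Hamilton cycle $C$ of length $10-|M|$ together with the five rung chords $c_1,\ldots,c_5$ (some possibly parallel), so by the standard criterion it is planar if and only if the chord conflict graph on $\{c_1,\ldots,c_5\}$ with respect to $C$ is bipartite.

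The key observation is that contracting an outer-cycle edge $v_i v_{i+1}$ can only identify the endpoints of two chords whose indices are cyclically consecutive in $\mathbb{Z}/5$. Hence, of the ten edges of the $K_5$ on $\{c_1,\ldots,c_5\}$, only the five ``pentagon'' edges $c_1c_2,\,c_2c_3,\,c_3c_4,\,c_4c_5,\,c_5c_1$ can be removed from the conflict graph by $M$, while the complementary ``pentagram'' edges $c_1c_3,\,c_3c_5,\,c_5c_2,\,c_2c_4,\,c_4c_1$ always survive. The pentagram is itself a $5$-cycle. A short check shows that at most two chord-pairs can become parallel through $M$ (each parallelism consumes two specific outer-cycle edges, and only non-adjacent pentagon pairs are matching-compatible); in every such case the resulting vertex identifications in the pentagram $C_5$ happen between non-adjacent vertices, leaving an odd cycle---in fact, a triangle. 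Hence the conflict graph is always non-bipartite, $M_{10}/M$ is non-planar, and $SM_{10}$ is not a string graph.

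For minimality, by the first clause of \cref{thm:subcubic-string} it suffices to exhibit, for each vertex $v$ (respectively edge $e$) of $SM_{10}$, a matching of $SM_{10}-v$ (respectively $SM_{10}/e$) whose contraction is planar. Using the dihedral symmetry $D_{10}$ of $SM_{10}$, there are only two vertex orbits (outer-cycle vertices and rung midpoints) and two edge orbits (outer-cycle edges and half-rung edges), so only four cases remain. In each case I would provide an explicit small matching---typically consisting of four alternating outer-cycle edges, chosen to create matched pentagon-adjacent pairs that force enough parallel chords to collapse the conflict graph to something bipartite (e.g., $\{v_2v_3,v_4v_5,v_7v_8,v_9v_{10}\}$ for the deletion of $v_1$)---and verify planarity by drawing the resulting shorter cycle with its (now possibly parallel) rung chords on opposite sides.

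The main technical obstacle is the pentagram-persistence argument in the first part: while the statement that matching edges only kill pentagon edges is quick, the parallel-chord cases require carefully checking that the contraction of the pentagram $C_5$ at two pairs of non-adjacent vertices still contains an odd cycle. The minimality verifications are routine once one has the right matching in each orbit, but they do require the right choice: naive single-edge matchings typically do not suffice, since the remaining four rung chords still pairwise cross on the shortened cycle.
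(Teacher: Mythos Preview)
Your non-string argument is essentially the paper's argument: both suppress the rung midpoints, reduce to the interlacing (conflict) graph of the five chords with respect to the contracted outer cycle, and observe that the five ``pentagram'' interlacings $c_1c_3,\,c_3c_5,\,c_5c_2,\,c_2c_4,\,c_4c_1$ form an odd cycle that survives every cubic matching, so the interlace graph stays non-bipartite and $SM_{10}/M$ is non-planar. Your extra analysis of parallel chords is not needed: when two chords acquire identical endpoints they become twin vertices in the interlace graph, and the pentagram $5$-cycle on $c_1,\dots,c_5$ is still present as a subgraph, which already forces non-bipartiteness. (Treating parallelism as a vertex identification in the conflict graph is not literally correct, but it is harmless for the bipartiteness conclusion.)

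For minimality, your plan is the right shape but is only a sketch, and it misses a simplification the paper uses. The paper splits into the same four symmetry classes and handles them as follows: deleting a degree-$3$ vertex already yields a \emph{planar} graph (no matching needed --- after suppressing degree-two vertices one gets $M_{10}$ minus a vertex, which is $M_8$ minus a cycle edge and is planar), so your proposed four-edge matching for this case is correct but superfluous, and your closing remark that ``single-edge matchings typically do not suffice, since the remaining four rung chords still pairwise cross'' is mistaken here. Contracting a half-rung edge is handled by contracting the other half of the same rung (this is $M_{10}$ with one rung contracted, which is planar). The two remaining cases --- contracting a cubic edge and deleting a degree-$2$ vertex --- genuinely require exhibiting a nontrivial planarizing matching, and the paper does this by explicit pictures. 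You should actually produce and verify matchings for these two cases rather than state that you ``would provide'' them; that is where the real content of the minimality proof lies.
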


The proofs of \cref{prop:KT3} and \cref{prop:SM10} are direct applications of \cref{thm:subcubic-string}; for the full proofs see \cref{sec:subcubic-proofs}. For a partial proof that the 14-vertex Heawood graph is also a minimal non-string graph, see \cref{sec:d3-antichain}.

\subsection{Girth of Subcubic Obstacles}
Since $KT_3$ contains a triangle, there is no nontrivial lower bound on the girth of subcubic obstacles. On the other hand, in what follows we show an upper bound. We utilize the following well-known bound on the edge density of planar graphs with high girth:

\begin{lemma}
\label{lem:euler-bound}
    Let $G$ be a connected planar graph with at least $3$ vertices and with girth at least $g$. Put $n = |V(G)|$ and $m = |E(G)|$. Then
    \[ \frac{m}{n} \leq \frac{g}{g-2}. \]
\end{lemma}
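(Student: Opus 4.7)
The plan is to derive the sharper bound $m \le g(n-2)/(g-2)$ by a standard Euler-formula argument, and then weaken it to the stated inequality by using $n-2 \le n$. First I would dispose of the degenerate case where $G$ is acyclic. If $G$ has no cycle then it is a tree, so $m = n-1$; since the girth of a tree is infinite it in particular exceeds any finite $g \ge 3$, and we have $m/n = (n-1)/n < 1 \le g/(g-2)$ for all $g \ge 3$, so the bound holds trivially. From now on I would assume $G$ contains at least one cycle, so that its girth is a finite integer at least $g$.

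Next I would fix a planar embedding of $G$ and invoke Euler's formula $n - m + f = 2$, where $f$ is the number of faces of the embedding. The key combinatorial step is the face-length inequality $2m \ge g f$. This follows from two observations: first, each edge contributes exactly $2$ to the total length of the face boundary walks (it contributes twice to one walk if it is a bridge, and once to each of two distinct walks otherwise), so $\sum_F \ell(F) = 2m$; and second, every face-boundary walk has length at least $g$. For the latter I would argue that since $G$ is connected and contains a cycle, the boundary walk of any face is a closed walk that encloses a region of the plane with nonempty interior (even for the unbounded face, whose boundary walk encloses the union of all bounded faces), so this walk must contain a graph cycle, and hence has length at least $g$.

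Combining $f \le 2m/g$ with Euler's formula yields
\[ 2 = n - m + f \le n - m + \frac{2m}{g}, \]
which rearranges to
\[ m \cdot \frac{g-2}{g} \le n - 2 \le n. \]
Dividing by $n$ gives the stated inequality $m/n \le g/(g-2)$.

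The only nontrivial point I expect is the justification that every face boundary walk has length at least $g$ when $G$ is allowed to have bridges or cut vertices, since in that case the walk need not itself be a cycle. I would handle this with the topological observation above, or alternatively by reducing to the $2$-edge-connected case by repeatedly deleting bridges and pendant trees (which strictly decreases $m/n$ for connected graphs with at least one cycle, so the resulting bridgeless subgraph is enough to establish the inequality for the original graph).
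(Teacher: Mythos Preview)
The paper does not actually prove this lemma; it is stated as a ``well-known bound'' and used without proof. Your main argument---Euler's formula combined with the face-length inequality $2m \ge gf$---is exactly the standard derivation and is correct, including your handling of the acyclic case and your topological justification that every face boundary in a connected planar graph with a cycle contains a cycle of $G$ (a forest cannot separate the plane, so the boundary subgraph of any face is not a forest).

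One minor issue with the alternative reduction you sketch at the end: the claim that deleting bridges and pendant trees ``strictly decreases $m/n$'' is false in general. Removing a pendant subtree with $k$ vertices and $k$ edges sends $m/n$ to $(m-k)/(n-k)$, which is \emph{larger} than $m/n$ whenever $m>n$ (for instance, $K_4$ with a single pendant has $m/n = 7/5$, but after deleting the pendant it becomes $6/4$). The reduction can be salvaged---adding a pendant edge back moves $m/n$ toward $1 \le g/(g-2)$ by the mediant inequality, so a bound on the $2$-edge-connected core does propagate to $G$---but since your primary Euler-formula argument already handles bridges and cut vertices directly, you do not need this aside at all.
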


We show that the properties of being a subcubic obstacle imply that the edge density of any such obstacle is bounded above $c$ for a constant $c > 1$, then use \cref{lem:euler-bound} to obtain an upper bound on the edge density of the obstacle that is smaller than $c$ if its girth is too large. The details are presented below.

\begin{theorem}
    Let $G$ be a subcubic non-string graph such that every proper induced minor of $G$ is a string graph. Then $G$ has girth smaller than $30$.
\end{theorem}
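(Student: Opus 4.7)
The plan is to derive a lower bound $m/n \geq c$ on the edge density of $G$ from minimality, together with an upper bound $m/n \leq f(g)$ derived from \cref{thm:subcubic-string} and \cref{lem:euler-bound}, that are incompatible once $g$ is large enough.

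For the lower bound, I would first note that a minimal obstacle has no vertex of degree $0$ or $1$ (each is absorbable into any string representation, contradicting minimality). The main structural claim is that no two adjacent vertices of $G$ both have degree $2$. Suppose $uv \in E(G)$ with $\deg(u) = \deg(v) = 2$. Then $G/uv$ is a proper induced minor, so by minimality and \cref{thm:subcubic-string} it admits a matching $M$ with $(G/uv)/M$ planar. I would case-analyze on whether the merged vertex $[uv]$ lies in $M$. If $[uv] \notin M$, then $M$ is already a matching of $G$ and $(G/uv)/M = (G/M)/(uv)$; since $u$ has degree $2$ in $G/M$, the edge $uv$ can be uncontracted planarly (place $u$'s single other incident edge contiguously in the cyclic order around $[uv]$), so $G/M$ is planar and $G$ is a string graph by \cref{thm:subcubic-string}---a contradiction. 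If $[uv]$ is matched to some $x$, then at least one of $ux, vx$ is an edge of $G$; say $ux$, and replacing $[uv]x$ by $ux$ in $M$ gives a matching $M'$ of $G$ satisfying $(G/uv)/M = (G/M')/([ux]v)$. Since $v$ has degree $2$ in $G/M'$, the same uncontraction argument shows $G/M'$ is planar, giving the same contradiction. The structural claim forces $2 n_2 \leq 3 n_3$, which together with $2m = 2 n_2 + 3 n_3$ yields $m/n \geq 6/5$.

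For the upper bound, pick any vertex $v$; by minimality and \cref{thm:subcubic-string}, $G - v$ admits a matching $M$ with $H = (G-v)/M$ planar. A cycle of length $\ell$ in $H$ lifts to a closed walk of length at most $2\ell$ in $G - v$ (each cycle vertex is a singleton or matched pair, contributing at most one matched edge to the walk), hence to a cycle in $G$ of length at most $2\ell$, so $H$ has girth at least $\lceil g/2 \rceil$. For $g$ large enough the contraction introduces no parallel edges (any two such would give a short cycle in $G$), so $|V(H)| = n - 1 - |M|$ and $|E(H)| = m - \deg(v) - |M|$. Applying \cref{lem:euler-bound} to $H$ with $\deg(v) \leq 3$ and $|M| \geq 0$ yields $m \leq 3 + \frac{g}{g-4}(n-1)$.

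Combining the two bounds yields $n(g - 24) \leq 10(g - 6)$, so for $g > 24$ we have $n \leq 10(g-6)/(g-24)$. To close the argument at $g \geq 30$, I would invoke the cubic Moore bound on the skeleton of $G$ obtained by suppressing every degree-$2$ vertex: by the structural claim each degree-$2$ chain has length one, so the skeleton is a simple cubic graph on $n_3 \geq (2/5) n$ vertices with girth at least $\lceil g/2 \rceil$; the Moore bound then gives $n_3$ exponential in $g$, and for $g = 30$ it forces $n_3 \geq 382$, far exceeding the $n \leq 40$ from the density argument. Hence $g < 30$. The main obstacle is the structural claim: its planar uncontraction step relies on one of the two split vertices having exactly one other incident edge, which is precisely the degree-$2$ hypothesis on both endpoints, and the claim cannot be strengthened to a single degree-$2$ endpoint since $\mathrm{sub}_1(K_{3,3})$ is a minimal subcubic obstacle containing degree-$2$ vertices adjacent to degree-$3$ vertices; the remaining density and Moore-bound computations are routine.
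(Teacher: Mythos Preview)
Your proposal is correct and shares the paper's core skeleton: both arguments establish the structural fact that a minimal subcubic obstacle has minimum degree two and no two adjacent degree-two vertices (yielding $m/n\ge 6/5$), delete a vertex to obtain a proper induced minor, invoke \cref{thm:subcubic-string} to find a planarizing matching, observe that the contracted planar graph has girth at least $g/2$, and then apply \cref{lem:euler-bound}. You also supply a proof of the ``no adjacent degree-two'' claim, which the paper merely asserts; your uncontraction argument via \cref{thm:subcubic-string} is sound (in the high-girth regime there are no parallel-edge complications).

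The difference lies in how the two bounds are combined. The paper notes that contracting a matching can only \emph{increase} edge density when the density exceeds $1$, so the planar graph $(G\setminus v)/M$ inherits density at least that of $G\setminus v$; using $n\ge g\ge 30$ this is at least $34/29$, which already exceeds the Euler bound $15/13$ for a planar graph of girth $\ge 15$, giving an immediate contradiction. You instead throw away the $|M|$ term (taking $|M|\ge 0$) to obtain $m\le 3+\tfrac{g}{g-4}(n-1)$, which together with $m\ge \tfrac{6}{5}n$ only yields $n\le 40$; you then bring in an additional ingredient, the cubic Moore bound applied to the degree-three skeleton, to force $n\ge n_3\ge 382$. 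This works, but the Moore bound is unnecessary: the paper's density-monotonicity observation closes the gap in one line. A small omission in your sketch is that you pick ``any vertex $v$'' without ensuring $G-v$ is connected (needed for \cref{lem:euler-bound} as stated); the paper handles this by choosing $v$ to be a degree-two vertex when one exists, and otherwise a non-cut vertex of the cubic graph.
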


\begin{proof}
    We begin by noting several properties of $G$ that follow from the fact that $G$ is a subcubic obstacle. Firstly, $G$ is connected, and $G \setminus v$ is connected for any $v \in V(G)$ of degree $2$. Additionally, $G$ does not contain any vertex $v$ with $\deg(v) \in \{0,1\}$. Further, $G$ does not contain two adjacent vertices $u$, $v$ of degree two.

    Since $G$ has no vertices of degree $0$ or $1$, $G$ is not a forest, and thus has finite girth. Suppose for contradiction that $G$ has girth at least $30$; then $n := |V(G)| \geq 30$. Since every $e \in E(G)$ has at least one endpoint of degree $3$, $G$ has density at least $\frac{6}{5}$, which is the density of the graph obtained from a cubic graph by subdividing each edge once. If $G$ is cubic, let $v \in V(G)$ be any vertex such that $G \setminus v$ is connected, and otherwise let $v \in V(G)$ be a vertex of degree $2$. If $G$ is cubic then it is easily verified that $G \setminus v$ has density at least $\frac{6}{5}$. If $G$ is not cubic, then $G \setminus v$ has $n-1$ vertices and at least $\frac{6}{5}n - 2$ edges. Thus the density of $G \setminus v$ is at least $\frac{\frac{6}{5}(30) - 2}{30 - 1} = \frac{34}{29}$.

    Since $G \setminus v$ is a proper induced minor of $G$, it is a string graph, and so by \cref{thm:subcubic-string} there is a matching $M$ of $G \setminus v$ such that $(G \setminus v) / M$ is planar. Since $G \setminus v$ has girth bigger than $3$, contracting $M$ does not create any parallel edges, and so $(G \setminus v) / M$ has $n - |M|$ vertices and at least $\frac{34}{29}n - |M|$ edges, thus the density of $G$ is at least $\frac{\frac{34}{29}n - |M|}{n - |M|} \geq \frac{34}{29}$. $(G \setminus v)/M$ has girth at least $15$, since $G \setminus v$ has girth at least $30$ and at most half of the edges in any cycle of $G \setminus v$ can be contained in $M$. Since $G \setminus v$ is connected, $(G \setminus v)/M$ is also connected. As $(G \setminus v)$ is planar, by \cref{lem:euler-bound} we have that the density of $(G \setminus v)/M$ is at most $\frac{15}{13}$, which is smaller than $\frac{34}{29}$ and thus a contradiction.
\end{proof}

\section{Subcubic Recognition} \label{sec:recognition}

Courcelle's theorem~\cite{Courcelle-Book} is a powerful algorithmic metatheorem for converting logical statements about graphs into algorithms.
It concerns the \emph{monadic second-order logic of graphs} $\mathrm{MSO}_2$, which allows quantified formulas over vertices, edges, vertex sets, and edge sets, with predicates for testing set membership and vertex--edge incidence. According to Courcelle's theorem, the problem of testing whether a graph models a formula of this type can be solved in time that is linear in the number of vertices in the graph, multiplied by a (non-elementary but computable) function of the formula length and the treewidth of the graph. Thus, for any fixed formula, recognizing the graphs that model the formula is fixed-parameter tractable in the treewidth of the graph. In order to design an algorithm of this type for a graph recognition problem, it is necessary only to find a logical formula characterizing the graphs to be recognized.
This methodology has been frequently used for problems in graph drawing and topological graph theory~\cite{BanEpp-JGAA-18,ChaDijKry-JGAA-20,ChaYen-WADS-15,EppKinKob-Algo-18,MueRut-GD-24}, including by Courcelle himself~\cite{Cou-TCS-00a,Cou-TCS-00b}.

\begin{observation}
The characterization of subcubic string graphs of \cref{thm:subcubic-string} can be expressed as a formula in $\mathrm{MSO}_2$.
\end{observation}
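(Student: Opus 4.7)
The plan is to construct an explicit MSO$_2$ sentence whose models are exactly the subcubic graphs satisfying the matching-planarity condition of Theorem \ref{thm:subcubic-string}. The sentence has the overall form
\[ \varphi \;:=\; \psi_{\leq 3} \;\wedge\; \exists M \subseteq E(G):\; \psi_{\mathrm{match}}(M) \,\wedge\, \psi_{\mathrm{plan}}(M), \]
where $\psi_{\leq 3}$ asserts subcubicity, $\psi_{\mathrm{match}}(M)$ asserts that $M$ is a matching, and $\psi_{\mathrm{plan}}(M)$ asserts that the contracted graph $G/M$ is planar. Subcubicity is a first-order formula (``no vertex is incident to four pairwise distinct edges''), and $\psi_{\mathrm{match}}$ is similarly first-order (``no two distinct edges of $M$ share an endpoint''); both use only the vertex--edge incidence predicate.

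The substantive step is $\psi_{\mathrm{plan}}(M)$. By Wagner's theorem (Theorem \ref{thm:wag}), $G/M$ is planar if and only if it contains neither $K_5$ nor $K_{3,3}$ as a minor, so it suffices to write, for each fixed $H \in \{K_5, K_{3,3}\}$, an MSO$_2$ formula stating that $G/M$ contains $H$ as a minor, and then negate and conjoin. For this, I would quantify over vertex sets $S_h \subseteq V(G)$, one for each of the (finitely many, bounded) vertices $h$ of $H$, and require:
\begin{itemize}
\item each $S_h$ is nonempty, and the $S_h$ are pairwise disjoint;
\item each $S_h$ is closed under $M$, meaning that for every $e \in M$, the two endpoints of $e$ are either both in $S_h$ or both outside it, so that $S_h$ descends to a well-defined vertex subset of $G/M$;
\item each $G[S_h]$ is connected, expressed by the standard MSO predicate that no partition of $S_h$ into two nonempty vertex subsets lacks a crossing edge of $G$;
\item for every edge $\{h,h'\}$ of $H$, there exists an edge in $E(G) \setminus M$ with one endpoint in $S_h$ and the other in $S_{h'}$.
\end{itemize}

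The main conceptual check is that this faithfully describes $H$-minors of $G/M$, not of $G$. Closure of $S_h$ under $M$ ensures that the branch sets project to well-defined vertex subsets of $G/M$; contracting a subset of edges inside a connected subgraph preserves connectivity, so connectivity of $G[S_h]$ is equivalent to connectivity of its image in $G/M$; and edges of $E(G) \setminus M$ correspond bijectively to edges of $G/M$, so the edge clauses produce exactly the required adjacencies of branch sets. Because $|V(H)|$ and $|E(H)|$ are bounded constants, the resulting formula has bounded quantifier depth and is a valid MSO$_2$ sentence. The only mild subtlety is being careful that the contraction structure is captured correctly by closure under $M$; beyond that, the argument uses only the textbook MSO-definability of connectivity and of fixed-pattern minor containment.
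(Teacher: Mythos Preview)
Your proposal is correct and follows essentially the same approach as the paper: both quantify over an edge set $M$, invoke Wagner's theorem, and encode $K_5$/$K_{3,3}$-minor containment in $G/M$ via branch sets in $G$ that are pairwise disjoint, connected, closed under $M$, and linked by edges of $G$. Your write-up is slightly more explicit (you add the matching and subcubicity clauses and justify why connectivity of $G[S_h]$ matches connectivity of its image in $G/M$), but the core construction is the same.
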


\begin{proof}
At a high level, we express in logic the existence of a set of edges $M$ such that contracting $M$ produces a graph that has neither $K_5$ nor $K_{3,3}$ as a minor. The existence of a $K_5$ minor, or a $K_{3,3}$ minor, in the contracted graph, can be expressed in $\mathrm{MSO}_2$ logic as the existence of five or six vertex sets of the original uncontracted graph, corresponding to the vertices of the minor, with the following properties:
\begin{itemize}
\item Any two of these vertex sets are disjoint: there does not exist a vertex belonging to any two of them.
\item Each of these five or six vertex sets forms a connected subgraph of the given graph. That is, for  each such set $S$, there is no cut $(S\cap T,S\setminus T)$ of the subgraph induced by $S$. Here, the existence of a cut may be expressed logically as the existence of a vertex set $T$ for which $S\cap T$ and $S\setminus T$ are nonempty and for which there does not exist an edge with one endpoint in $S\cap T$ and the other endpoint in $S\setminus T$.
\item For each edge $(u,v)$ in the minor, represented by the vertex sets $U$ and $V$, there exists an edge in the given graph whose two endpoints belong to $U$ and $V$.
\item The minor respects the contraction specified by $M$: for each edge $e$ in $M$, it is not the case that one of the endpoints of $e$ belongs to one of the five or six vertex sets of the minor but the other endpoint does not belong to the same set.
\end{itemize}
Each of these properties is straightforward to formalize in $\mathrm{MSO}_2$ logic.
\end{proof}

\begin{corollary}
Subcubic string graphs can be recognized in time that is fixed-parameter tractable with respect to treewidth. Within any subcubic graph family $\mathcal{F}$ of bounded treewidth, string graphs can be recognized in time that is linear in the representation of the input graph.
\end{corollary}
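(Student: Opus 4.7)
The plan is to combine the preceding observation with Courcelle's theorem in the standard way. By the observation, there exists a fixed $\mathrm{MSO}_2$ formula $\varphi$ such that a subcubic graph $G$ models $\varphi$ if and only if $G$ admits a matching $M$ with $G/M$ planar, which by \cref{thm:subcubic-string} is exactly the condition for $G$ to be a string graph. Because $\varphi$ does not depend on the input, its length is a constant, and Courcelle's theorem immediately yields an algorithm that decides $G \models \varphi$ in time $f(\mathrm{tw}(G)) \cdot n$, where $n$ is the size of the input and $f$ is some computable function. This establishes fixed-parameter tractability in treewidth for subcubic string graph recognition.

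For the second claim, I would argue that if $\mathcal{F}$ is a subcubic graph family of treewidth at most some constant $w$, then for every $G \in \mathcal{F}$ the factor $f(\mathrm{tw}(G))$ is bounded by the constant $f(w)$. Hence the algorithm above runs in time $O(n)$ on inputs from $\mathcal{F}$. One caveat is that Courcelle's algorithm typically requires a tree decomposition of bounded width as part of its input; I would handle this by invoking Bodlaender's linear-time algorithm for computing a tree decomposition of width at most $w$ whenever one exists, which only adds an $O(n)$ overhead and preserves the linear bound. Alternatively, since the family $\mathcal{F}$ is assumed to consist of graphs of bounded treewidth, one can apply Bodlaender's algorithm once as preprocessing.

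The only real subtlety to watch is ensuring that the formula $\varphi$ constructed in the observation is genuinely fixed and independent of the input: the witness sets for $M$ and for the absence of $K_5$ and $K_{3,3}$ minors in $G/M$ are all quantified inside $\varphi$ itself, so $\varphi$ indeed has constant length. Given this, no further work is needed beyond citing Courcelle's theorem and Bodlaender's algorithm, so I do not expect any genuine obstacle; the corollary follows immediately from the observation and the standard FPT toolkit.
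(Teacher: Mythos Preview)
Your proposal is correct and matches the paper's approach: the paper states this as an immediate corollary of the preceding observation together with Courcelle's theorem, and your write-up simply spells out the standard details (fixed formula length, Bodlaender's algorithm for the tree decomposition) that the paper leaves implicit.
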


\bibliographystyle{plainurl}
\bibliography{string}

\begin{thebibliography}{10}

\bibitem{AKKV17}
James Abello, Pavel Klavík, Jan Kratochvíl, and Tomáš Vyskočil.
\newblock Msol restricted contractibility to planar graphs.
\newblock {\em Theoretical Computer Science}, 676:1--14, 2017.
\newblock URL: \url{https://www.sciencedirect.com/science/article/pii/S0304397517301652}, \href {https://doi.org/10.1016/j.tcs.2017.02.030} {\path{doi:10.1016/j.tcs.2017.02.030}}.

\bibitem{AH82}
Takao Asano and Tomio Hirata.
\newblock Edge-deletion and edge-contraction problems.
\newblock In {\em Proceedings of the Fourteenth Annual ACM Symposium on Theory of Computing}, STOC '82, page 245–254, New York, NY, USA, 1982. Association for Computing Machinery.
\newblock \href {https://doi.org/10.1145/800070.802198} {\path{doi:10.1145/800070.802198}}.

\bibitem{AusPar-JMM-61}
Louis Auslander and S.~V. Parter.
\newblock {On imbedding graphs in the sphere}.
\newblock {\em Journal of Mathematics and Mechanics}, 10(3):517{--}523, 1961.
\newblock URL: \url{https://www.jstor.org/stable/24900736}.

\bibitem{BanEpp-JGAA-18}
Michael~J. Bannister and David Eppstein.
\newblock {Crossing minimization for 1-page and 2-page drawings of graphs with bounded treewidth}.
\newblock {\em J. Graph Algorithms Appl.}, 22(4):577{--}606, 2018.
\newblock \href {https://doi.org/10.7155/jgaa.00479} {\path{doi:10.7155/jgaa.00479}}.

\bibitem{Ben-PNAS-59}
Seymour Benzer.
\newblock {On the topology of the genetic fine structure}.
\newblock {\em Proc. Nat. Acad. Sci.}, 45(11):1607{--}1620, 1959.
\newblock \href {https://doi.org/10.1073/pnas.45.11.1607} {\path{doi:10.1073/pnas.45.11.1607}}.

\bibitem{CabMoh-SICOMP-13}
Sergio Cabello and Bojan Mohar.
\newblock {Adding one edge to planar graphs makes crossing number and 1-planarity hard}.
\newblock {\em SIAM J. Comput.}, 42(5):1803{--}1829, 2013.
\newblock \href {https://doi.org/10.1137/120872310} {\path{doi:10.1137/120872310}}.

\bibitem{ChaYen-WADS-15}
Yi{-}Jun Chang and Hsu{-}Chun Yen.
\newblock {A new approach for contact graph representations and its applications}.
\newblock In {\em Proc. 14th Int. Symp. Algorithms and Data Structures (WADS)}, volume 9214 of {\em Lecture Notes in Computer Science}, pages 166{--}177. Springer, 2015.
\newblock \href {https://doi.org/10.1007/978-3-319-21840-3_14} {\path{doi:10.1007/978-3-319-21840-3_14}}.

\bibitem{ChaDijKry-JGAA-20}
Steven Chaplick, Thomas~C. van Dijk, Myroslav Kryven, Ji-won Park, Alexander Ravsky, and Alexander Wolff.
\newblock {Bundled crossings revisited}.
\newblock {\em J. Graph Algorithms Appl.}, 24(4):621{--}655, 2020.
\newblock \href {https://doi.org/10.7155/jgaa.00535} {\path{doi:10.7155/jgaa.00535}}.

\bibitem{Cou-TCS-00a}
Bruno Courcelle.
\newblock {The monadic second-order logic of graphs, XII: Planar graphs and planar maps}.
\newblock {\em Theoret. Comput. Sci.}, 237(1-2):1{--}32, 2000.
\newblock \href {https://doi.org/10.1016/S0304-3975(99)00305-9} {\path{doi:10.1016/S0304-3975(99)00305-9}}.

\bibitem{Cou-TCS-00b}
Bruno Courcelle.
\newblock {The monadic second-order logic of graphs, XIII: Graph drawings with edge crossings}.
\newblock {\em Theoret. Comput. Sci.}, 244(1-2):63{--}94, 2000.
\newblock \href {https://doi.org/10.1016/S0304-3975(00)00180-8} {\path{doi:10.1016/S0304-3975(00)00180-8}}.

\bibitem{Courcelle-Book}
Bruno Courcelle and Joost Engelfriet.
\newblock {\em {Graph Structure and Monadic Second-Order Logic: A Language-Theoretic Approach}}.
\newblock Cambridge University Press, 2012.
\newblock \href {https://doi.org/10.1017/CBO9780511977619} {\path{doi:10.1017/CBO9780511977619}}.

\bibitem{DiBEadTam-GD-98}
Giuseppe Di~Battista, Peter Eades, Roberto Tamassia, and Ioannis~G. Tollis.
\newblock {3.3: Planarity testing}.
\newblock In {\em Graph Drawing: Algorithms for the Visualization of Graphs}, pages 74{--}81. Prentice Hall, 1998.

\bibitem{EhrEveTar-JCT-76}
Gideon Ehrlich, Shimon Even, and Robert~E. Tarjan.
\newblock {Intersection graphs of curves in the plane}.
\newblock {\em J. Combin. Theory}, 21(1):8{--}20, 1976.
\newblock \href {https://doi.org/10.1016/0095-8956(76)90022-8} {\path{doi:10.1016/0095-8956(76)90022-8}}.

\bibitem{EppKinKob-Algo-18}
David Eppstein, Philipp Kindermann, Stephen Kobourov, Giuseppe Liotta, Anna Lubiw, Aude Maignan, Debajyoti Mondal, Hamideh Vosoughpour, Sue Whitesides, and Stephen Wismath.
\newblock {On the planar split thickness of graphs}.
\newblock {\em Algorithmica}, 80(3):977{--}994, 2018.
\newblock \href {https://doi.org/10.1007/s00453-017-0328-y} {\path{doi:10.1007/s00453-017-0328-y}}.

\bibitem{Fru-JGT-76}
Roberto Frucht.
\newblock {A canonical representation of trivalent Hamiltonian graphs}.
\newblock {\em J. Graph Theory}, 1(1):45{--}60, 1976.
\newblock \href {https://doi.org/10.1002/jgt.3190010111} {\path{doi:10.1002/jgt.3190010111}}.

\bibitem{GuyHar-CMB-67}
Richard~K. Guy and Frank Haray.
\newblock {On the M{\"o}bius ladders}.
\newblock {\em Canad. Math. Bull.}, 10(4):493{--}496, 1967.
\newblock \href {https://doi.org/10.4153/CMB-1967-046-4} {\path{doi:10.4153/CMB-1967-046-4}}.

\bibitem{Hli-GD-95}
Petr Hlinen{\'y}.
\newblock {Contact graphs of curves}.
\newblock In {\em Proc. Symp. on Graph Drawing (GD '95)}, volume 1027 of {\em Lecture Notes in Computer Science}, pages 312{--}323. Springer, 1995.
\newblock \href {https://doi.org/10.1007/BFB0021814} {\path{doi:10.1007/BFB0021814}}.

\bibitem{Kra-JCTB-91a}
Jan Kratochv{\'\i}l.
\newblock {String Graphs. I. The number of critical nonstring graphs is infinite}.
\newblock {\em J. Combin. Theory Ser. B}, 52(1):53{--}66, 1991.
\newblock \href {https://doi.org/10.1016/0095-8956(91)90090-7} {\path{doi:10.1016/0095-8956(91)90090-7}}.

\bibitem{Kra-JCTB-91b}
Jan Kratochv{\'\i}l.
\newblock {String Graphs. II. Recognizing string graphs is $\mathsf{NP}$-hard}.
\newblock {\em J. Combin. Theory Ser. B}, 52(1):67{--}78, 1991.
\newblock \href {https://doi.org/10.1016/0095-8956(91)90091-W} {\path{doi:10.1016/0095-8956(91)90091-W}}.

\bibitem{KraGolKuc-RCA-86}
Jan Kratochv{\'\i}l, Miroslav Goljan, and Petr Ku{\v{c}}era.
\newblock {String graphs}.
\newblock {\em Rozpravy {\v{C}}eskoslovensk{\'e} Akad. V{\v{e}}d {\v{R}}ada Mat. P{\v{r}}{\'\i}rod. V{\v{e}}d}, 96(3):96, 1986.

\bibitem{MatNesTho-CMUC-88}
J{\v{i}}rí Matou{\v{s}}ek, Jaroslav Ne{\v{s}}et{\v{r}}il, and Robin Thomas.
\newblock {On polynomial time decidability of induced-minor-closed classes}.
\newblock {\em Comment. Math. Univ. Carolin.}, 29(4):703{--}710, 1988.
\newblock \href {https://doi.org/10338.dmlcz/106687} {\path{doi:10338.dmlcz/106687}}.

\bibitem{MueRut-GD-24}
Miriam M{\"u}nch and Ignaz Rutter.
\newblock {Parameterized algorithms for beyond-planar crossing numbers}.
\newblock In {\em Proc. 32nd Int. Symp. Graph Drawing and Network Visualization (GD)}, volume 320 of {\em LIPIcs}, pages 25:1{--}25:16. Schloss Dagstuhl {--} Leibniz-Zentrum f{\"u}r Informatik, 2024.
\newblock \href {https://doi.org/10.4230/LIPICS.GD.2024.25} {\path{doi:10.4230/LIPICS.GD.2024.25}}.

\bibitem{MP21}
Irina Mustata and Martin Pergel.
\newblock What makes the recognition problem hard for classes related to segment and string graphs?
\newblock {\em CoRR}, abs/2201.08498, 2022.
\newblock URL: \url{https://arxiv.org/abs/2201.08498}, \href {https://arxiv.org/abs/2201.08498} {\path{arXiv:2201.08498}}.

\bibitem{RobSey-JCTB-95}
Neil Robertson and Paul Seymour.
\newblock {Graph Minors. XIII. The disjoint paths problem}.
\newblock {\em J. Combin. Theory Ser. B}, 63(1):65{--}110, 1995.
\newblock \href {https://doi.org/10.1006/jctb.1995.1006} {\path{doi:10.1006/jctb.1995.1006}}.

\bibitem{SchSedSte-JCSS-03}
Marcus Schaefer, Eric Sedgwick, and Daniel {\v{S}}tefankovi{\v{c}}.
\newblock {Recognizing string graphs in $\mathsf{NP}$}.
\newblock {\em J. Computer and System Sciences}, 67(2):365{--}380, 2003.
\newblock \href {https://doi.org/10.1016/S0022-0000(03)00045-X} {\path{doi:10.1016/S0022-0000(03)00045-X}}.

\bibitem{Sin-BSTJ-66}
F.~W. Sinden.
\newblock {Topology of thin film RC-circuits}.
\newblock {\em Bell System Technical Journal}, 45(9):1639{--}1662, 1966.
\newblock \href {https://doi.org/10.1002/j.1538-7305.1966.tb01713.x} {\path{doi:10.1002/j.1538-7305.1966.tb01713.x}}.

\bibitem{Wag-MA-37}
Klaus Wagner.
\newblock {{\"U}ber eine Eigenschaft der ebenen Komplexe}.
\newblock {\em Math. Ann.}, 114:570{--}590, 1937.
\newblock \href {https://doi.org/10.1007/BF01594196} {\path{doi:10.1007/BF01594196}}.

\bibitem{Whi-23}
Moshe~J. White.
\newblock {A Van Kampen type obstruction for string graphs}.
\newblock Electronic preprint arxiv:2308.06833, August 2023.

\end{thebibliography}

\newpage
\appendix

\section{Full proofs of subcubic obstacles} \label{sec:subcubic-proofs}

\begin{proof}[Proof of \cref{prop:KT3}]
    We first show that $KT_3$ is not a string graph via \cref{thm:subcubic-string}. We refer to vertices of $KT_3$ according to their colors in \cref{fig:subcubic-obstacles}. The only choices of cubic matchings to contract are: one of the red--red edges, one of the red--blue edges, both of the red--blue edges, or one red--blue edge and the opposite red--red edge. For each of these choices, contracting the matching produces a graph that contains (as a subgraph) a subdivision of $K_{3,3}$, and so is non-planar. It follows that $KT_3$ is a non-string graph.

It remains to show that every proper induced minor of $KT_3$ is a string graph. Any vertex deletion results directly in a planar graph. Let $f$ be an edge of $KT_3$; we show that there is a cubic matching $M$ of $KT_3 / f$ such that $(KT_3/f)/M$ is planar. For convenience, in what follows we give the vertex arising from the contraction of $f$ the color white, and the other vertices retain their colors. We have the following cases:
\begin{itemize}
    \item If $f$ is a red--red edge, then contracting either one of the two white--blue edges planarizes the graph.
    \item If $f$ is a red--blue edge, then contracting either one of the two white--red edges planarizes the graph.
    \item If $f$ is the red--yellow edge, then contracting the white--blue edge and the two red--blue edges planarizes the graph.
    \item If $f$ is a blue--yellow edge, then the white vertex has three neighbors. If any of its neighbors is blue, then contracting the white--blue edge planarizes the graph. Otherwise,  the white vertex has a red neighbor, and then contracting the white--red edge and the two red--blue edges planarizes the graph.
\end{itemize}
    By \cref{thm:subcubic-string}, in each case $KT_3/f$ is a string graph, from which the result follows.
\end{proof}

\begin{proof}[Proof of \cref{prop:SM10}]
To prove that $SM_{10}$ is a non-string graph, by \cref{thm:subcubic-string}, it suffices to show that every cubic matching is not planarizing. Any such matching contracts some of the edges of the cycle of ten degree-3 vertices in $SM_{10}$, leaving it still in the form of a shorter cycle. Following the Auslander–Parter planarity testing algorithm~\cite{AusPar-JMM-61,DiBEadTam-GD-98}, for the result to be planar, the \emph{pieces} of the graph with respect to this cycle (the two-edge paths through degree-2 vertices) must have an \emph{interlacing graph} that is bipartite. Here, two pieces are adjacent in the interlacing graph when the vertices where they attach to cycle are in alternating order around the cycle. In $SM_{10}$ itself, the interlacing graph is isomorphic to $K_5$, with each piece interlacing with two pieces that are consecutive to it around the 10-cycle and two more pieces that are non-consecutive. The non-consecutive interlacing pairs form a 5-cycle that is unaffected by contracting a matching, so the interlacing graph is non-bipartite and the contracted graph is non-planar.

To prove that $SM_{10}$ is minimal as a non-string graph, we consider cases:
\begin{itemize}
\item If a degree-3 vertex is deleted from $SM_{10}$, the result is planar.
\item If an edge incident to a degree-2 vertex is contracted, the result is a subcubic graph that can be planarized by contracting the other edge incident to the same degree-2 vertex. Therefore, the result of the first contraction is a string graph by \cref{thm:subcubic-string}.
\item If a cubic edge is contracted, the resulting induced minor is not cubic, but it nevertheless has a planarizing matching depicted in \cref{fig:SM10-contraction}  (left). Therefore, it is a string graph.
\item If a degree-2 vertex is deleted, the resulting induced minor has a planarizing matching depicted in \cref{fig:SM10-contraction}  (right). Therefore, it is a string graph.
\end{itemize}
\end{proof}

\begin{figure}
    \centering
    \includegraphics[width=\linewidth]{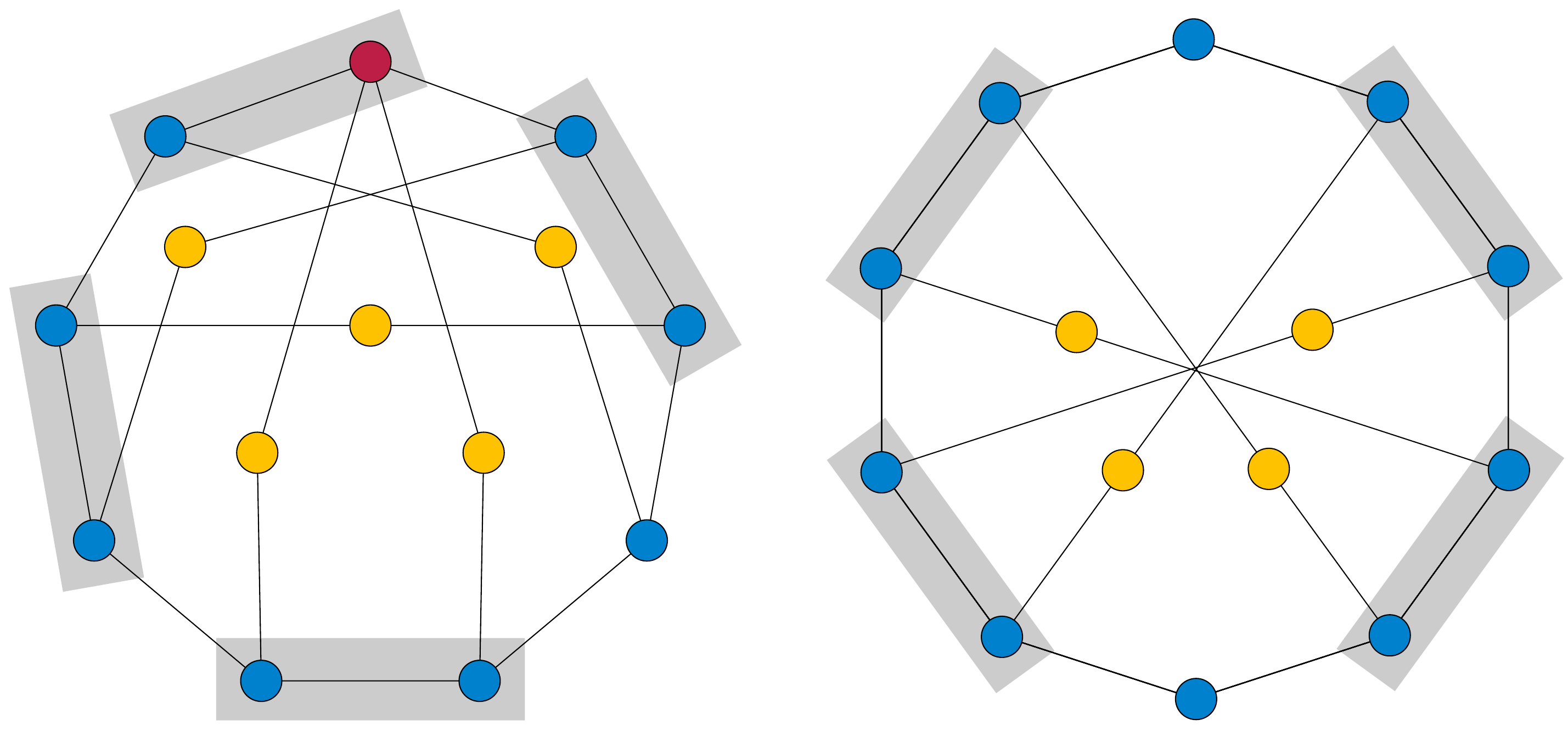}
    \caption{Left: a planarizing matching for the induced minor obtained by contracting a cubic edge of $SM_{10}$. Right: a planarizing matching for the induced minor obtained by deleting a degree-2 vertex of $SM_{10}$.}
    \label{fig:SM10-contraction}
\end{figure}

\section{A cubic antichain}
\label{sec:d3-antichain}

\begin{figure}[t]
\includegraphics[width=\textwidth]{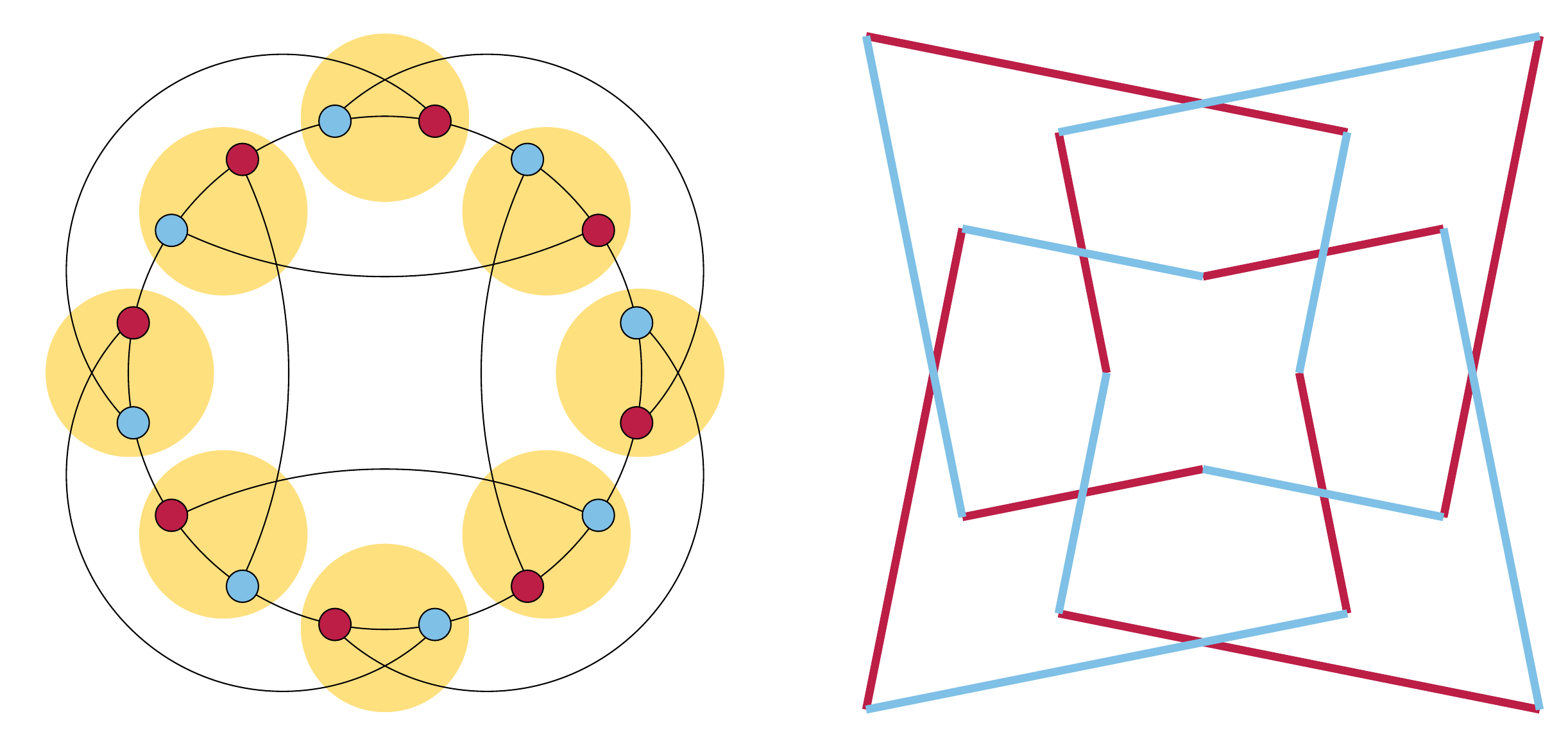}
\caption{The Möbius–Kantor graph. Left: the vertices are arranged in order around a Hamiltonian cycle, with chords alternating five steps forward and back, according to its LCF notation $[5,-5]^8$. The non-path edges highlighted in yellow form a planarizing matching. Right: the corresponding string representation. The cycle of end-to-end contacts of strings is a different Hamiltonian cycle consisting of the path edges.}
\label{fig:mk-string}
\end{figure}

Matoušek, Nešetřil, and Thomas~\cite{MatNesTho-CMUC-88} describe an infinite antichain in the induced minor order, consisting of planar graphs of maximum degree eight. None of these graphs can be transformed into another graph in the family by taking induced minors. 

In this section we describe an infinite family of cubic graphs which form an antichain in the induced minor order. These graphs are described in LCF notation~\cite{Fru-JGT-76} by the notation $[5,-5]^n$; this means that they consist of a $2n$-vertex Hamiltonian cycle together with chords that alternate between advancing five steps forward along the cycle and five steps backward. Several well-known graphs have this form: the Heawood graph or 6-cage is $[5,-5]^7$, the Möbius–Kantor graph is $[5,-5]^8$ (\cref{fig:mk-string}), and the smallest zero-symmetric graph is $[5,-5]^9$. We will assume $n\ge 7$ to ensure that the girth of the resulting graphs equals six.

\begin{theorem}
The graphs $[5,-5]^n$ with $n\ge 7$ form an antichain in the induced minor ordering.
\end{theorem}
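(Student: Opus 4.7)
The plan is to prove, for $m, n \ge 7$ with $m \ne n$, that $[5,-5]^m$ and $[5,-5]^n$ are incomparable in the induced-minor order. The direction $m > n$ is immediate from vertex counts, so I would focus on showing that $[5,-5]^m$ is not an induced minor of $[5,-5]^n$ when $7 \le m < n$.

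I would first establish that each $G_n := [5,-5]^n$ for $n \ge 7$ is cubic, bipartite, of girth exactly six, and embeds on the torus as a hexagonal tiling with $2n$ vertices, $3n$ edges, and $n$ hexagonal faces. This follows from a short Euler computation: on the torus $V - E + F = 0$, cubicity gives $2E = 3V$, and girth at least six gives $2E \ge 6F$, so $F = V/2$ and every face must be a hexagon. One then checks directly from the LCF description that no shorter non-contractible cycles appear for $n \ge 7$.

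Next, I would argue that any cubic, girth-$\ge 6$ induced minor $H$ of $G_n$ is itself a hexagonal tiling of the torus. An induced minor of a toroidal graph has genus at most one; the same Euler-style inequality rules out embeddings on the sphere and the projective plane for cubic girth-$6$ graphs; so $H$ must be a hexagonal torus tiling.

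The crux is then to show that no \emph{proper} hexagonal toroidal cubic graph arises as an induced minor of $G_n$. For a reduction with connected branch sets $B_1, \dots, B_k$ and deleted set $D$, cubicity of the quotient together with the handshaking identity
\[ 2|E(B_i)| + |E(B_i, V \setminus B_i)| = 3|B_i| \]
and simplicity of $H$ (since $H$ has girth $\ge 6$ it has no multi-edges) yields rigid constraints on branch-set sizes, internal edges, and edges to $D$. In particular, a singleton branch set must have all three neighbors in $V \setminus D$ and lying in three distinct other branch sets, while a tree-shaped branch set of size $k$ contributes exactly $k-1$ edges to $D$. I would then trace the effect on $G_n$'s toroidal embedding: a nontrivial branch-set contraction along a subtree merges two hexagonal faces into a face of at least ten sides, and a vertex deletion similarly destroys hexagonality of the three faces it touches.

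The main obstacle is that $H$'s hexagonal toroidal embedding need not be inherited from $G_n$'s embedding, so a purely local face-tracking argument does not immediately conclude. I plan to overcome this by introducing a global ``defect from hexagonality'' invariant --- summed over the faces of the embedded quotient --- that is nonnegative, vanishes only on hexagonal tilings, and strictly increases under any nontrivial induced-minor operation; this would force $|V(H)| = 2n$ and hence $m = n$, completing the antichain claim.
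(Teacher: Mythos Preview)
Your approach differs substantially from the paper's and, as written, has a genuine gap at the decisive step.

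The topological setup (your steps 1--2) is sound: each $G_n$ is indeed a hexagonal torus tiling, and any cubic girth-$\ge 6$ induced minor must also embed on the torus with all hexagonal faces by the Euler inequalities you cite. The handshaking counts on branch sets are also correct.

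The gap is step 3. You correctly observe that your face-merging argument (``a nontrivial contraction produces a face of length $\ge 10$'') only constrains the \emph{inherited} embedding, whereas $H$ might be hexagonal in some other torus embedding. Your proposed remedy---a ``defect from hexagonality'' invariant that is nonnegative, vanishes exactly on hexagonal tilings, and strictly increases under every nontrivial induced-minor operation---is never defined, and there is no evident reason such an invariant exists with those monotonicity properties. Induced-minor operations act on abstract graphs, not on embedded ones, so any such invariant would have to be a minimum over embeddings, and monotonicity of that minimum under arbitrary deletions and contractions is far from automatic. This is exactly the hard part of the theorem, and you have deferred it entirely to an object you have not constructed.

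The paper avoids embeddings altogether. It distinguishes \emph{path edges} (those lying on three locally parallel paths that alternate $+1$ and $+5$ steps along the Hamiltonian cycle) from \emph{non-path edges}, and argues in two cases: (i) any vertex deletion, or any contraction of a non-path edge, drops the local connectivity between the ``ahead'' and ``behind'' portions of a neighborhood from three to two, and this cannot be restored by further operations; (ii) contracting a path edge shortens the face $6$-cycle containing it, reducing the girth, unless all four path edges of that face are contracted---but that forces a contraction in the next face along the cycle, which propagates until every path edge is contracted. Both steps use abstract-graph invariants (local connectivity, girth) that are manifestly monotone under induced-minor operations, so no embedding bookkeeping is needed.
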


\begin{proof}
First, observe that in the neighborhood of any vertex $v$ of one of these graphs, we can decompose the graph into three vertex-disjoint paths using edges that advance in alternating $+1$ and $+5$ steps along the Hamiltonian cycle. (Globally, these paths might connect into a single Hamiltonian cycle, as they do in \cref{fig:mk-string}, or three disjoint cycles, depending on $n$.) Thus, within this neighborhood, the part of the neighborhood ahead of $v$ around the Hamiltonian cycle and the part of the neighborhood behind $v$ are connected by three vertex-disjoint paths. Call the edges of these paths \emph{path edges} and the remaining edges \emph{non-path edges}; the non-path edges are the ones between a forward and backward five-step chord of the Hamiltonian cycle, highlighted in the yellow matching in \cref{fig:mk-string}. If any vertex of the graph were removed in the process of forming an induced minor, or if any non-path edge connecting vertices from two different local paths were contracted, the local connectivity would be reduced to two, and could not be restored by additional deletions or contractions. Therefore, no such operation can be used in forming a graph $[5,-5]^m$ as an induced minor of another graph $[5,-5]^n$.

\begin{figure}[t]
\includegraphics[width=\textwidth]{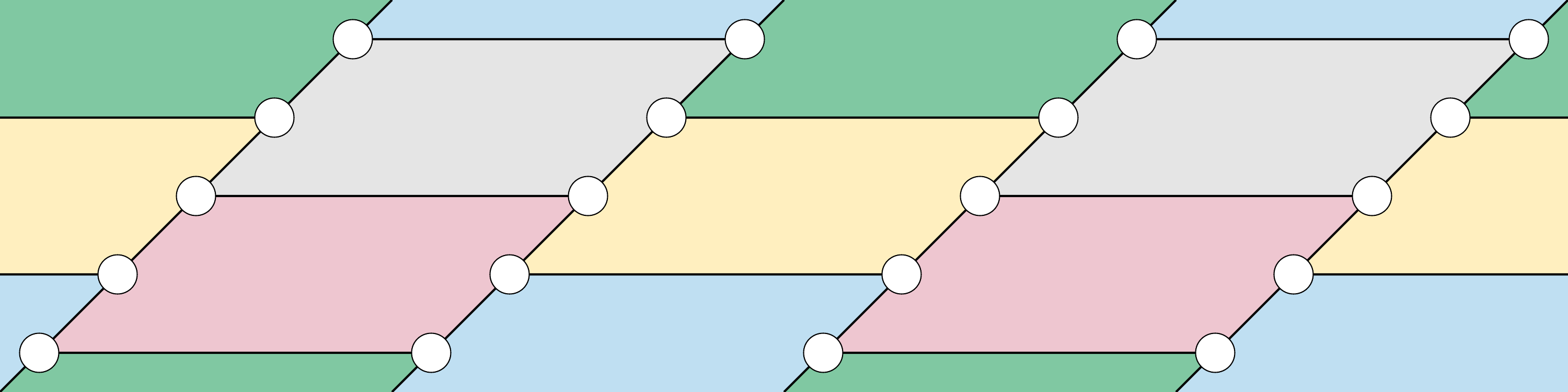}
\caption{Torus embedding of $[5,-5]^{10}$.}
\label{fig:torus-20}
\end{figure}

It remains to show that contractions of path edges cannot produce $[5,-5]^m$ from $[5,-5]^n$. Along with the obvious 6-cycles formed by a chord and five edges of the Hamiltonian cycle, $[5,-5]^n$ has many additional 6-cycles whose edges, in terms of the difference in positions of their endpoints along the Hamiltonian cycle, follow the pattern $+1,+1,+5,-1,-1,-5$. These additional 6-cycles form the faces of a torus embedding of $[5,-5]^n$ (\cref{fig:torus-20}), so we call them \emph{face cycles}. If any edge of one of these face cycles is contracted, this contraction will reduce the girth of the graph, unless at least four edges of the same face are all contracted. Because every edge of $[5,-5]^n$ belongs to a face cycle, and each face cycle has exactly four path edges, the only way to avoid reducing the girth of the graph, using only contractions of uncrossed edges, is to contract all four path edges of at least one face cycle. But this also contracts an edge of the face cycle two steps ahead in the Hamiltonian cycle, which must again have all four of its path edges contracted, and so on. So any contraction of path edges that avoids reducing the girth of the graph must contract all path edges in the entire graph, which cannot result in another graph of the form $[5,-5]^m$.
\end{proof}

\begin{figure}[t]
\includegraphics[width=\textwidth]{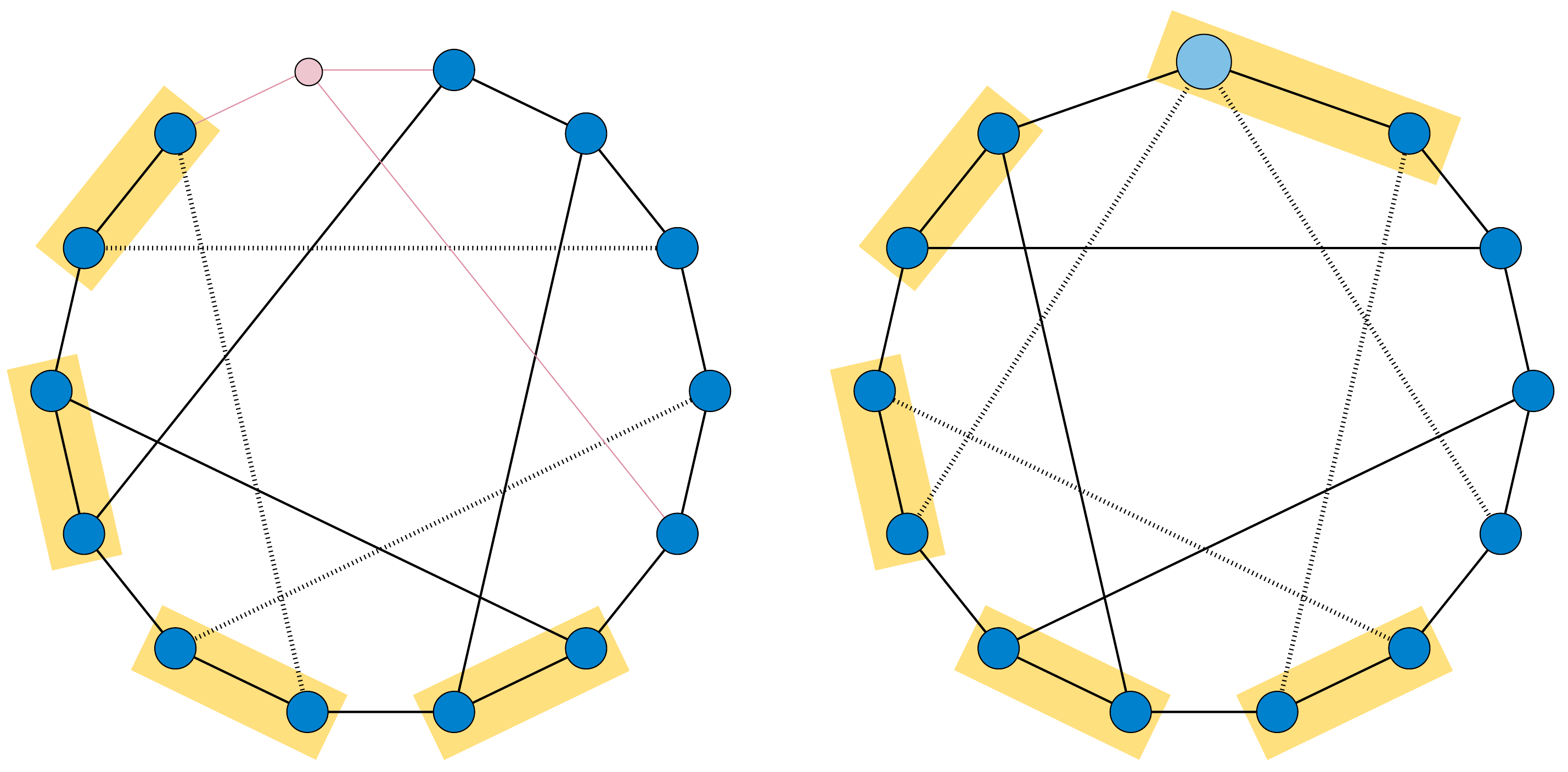}
\caption{Planarizing matchings for proper minors of the Heawood graph. Left: Deletion of the light red vertex. Right: Contraction of an edge into the light blue degree-4 vertex.}
\label{fig:heawood-minimality}
\end{figure}

In an unpublished preprint, Moshe White proves that the Heawood graph $[5,-5]^7$ is not a string graph~\cite{Whi-23}. Assuming this proof to be valid, this graph provides another new cubic obstacle to string graphs:

\begin{observation}
Every proper induced minor of the Heawood graph is a string graph.
\end{observation}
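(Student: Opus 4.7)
The plan is to exploit the symmetry of the Heawood graph $H$ to reduce to two verifications, then appeal to the first direction of \cref{thm:subcubic-string}, which guarantees that a graph is a string graph whenever some matching in it has planar contraction.

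First I would observe that $H$ is arc-transitive: its automorphism group is transitive on both vertices and on edges. Consequently, up to isomorphism there is a unique graph $H - v$ obtained from $H$ by a single vertex deletion, and a unique graph $H / e$ obtained by a single edge contraction. Every proper induced minor of $H$ is obtained by starting with one such operation and then taking further induced minors; since the class of string graphs is closed under induced minors, it suffices to show that the two graphs $H - v$ and $H / e$ are both string graphs.

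For each of these two graphs, I would exhibit a matching whose contraction produces a planar graph and then invoke \cref{thm:subcubic-string}. The required matchings are precisely the ones displayed in \cref{fig:heawood-minimality}: the left panel supplies a matching of $H - v$, and the right panel supplies a matching of $H / e$ (whose contracted vertex has degree four). Note that the first direction of \cref{thm:subcubic-string} does not require the matching to be cubic, so the presence of the degree-$4$ vertex in $H / e$ is not an obstruction.

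The main (and essentially only) piece of work is the direct verification that the two exhibited contracted graphs are planar. Both have at most $12$ vertices, so this is a finite check: one performs the contractions, identifies any parallel edges that arise, and produces an explicit planar embedding of the resulting simple graph. Here the girth-six property of $H$ is helpful, since it sharply limits the short cycles (and hence any parallel edges) that can be created by a single edge contraction followed by a matching contraction. Once the two planar embeddings are drawn, \cref{thm:subcubic-string} completes the proof.
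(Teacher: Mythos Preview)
Your proposal is correct and follows essentially the same approach as the paper: reduce to the two cases $H-v$ and $H/e$ via vertex- and edge-transitivity, then exhibit planarizing matchings (those of \cref{fig:heawood-minimality}) and invoke the first direction of \cref{thm:subcubic-string}. The only cosmetic difference is that the paper verifies planarity of the contracted graphs by noting a bipartition of the interlace graph of the remaining Hamiltonian cycle, whereas you propose a direct embedding check.
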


\begin{proof}
Because the Heawood graph is both vertex-transitive and edge-transitive, we need consider only two proper induced minors, obtained by deleting one vertex or by contracting one edge. Planarizing matchings are shown for each in \cref{fig:heawood-minimality}, with matched pairs of vertices indicated by the yellow shading. The solid and dashed coloring of the chords of the remaining Hamiltonian cycle indicates a bipartition of the interlace graph with respect to this cycle, indicating that the result is planar. It can be drawn planarly (after contracting the matched pairs) by rerouting the dashed edges outside the Hamiltonian cycle.
\end{proof}

As \cref{fig:mk-string} shows, the Möbius–Kantor graph $[5,-5]^8$ is a string graph, and this can be generalized to $[5,-5]^n$ for even $n$:

\begin{observation}
Every graph $[5,-5]^n$ for even $n$ is a string graph.
\end{observation}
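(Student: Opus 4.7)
The plan is to apply \cref{thm:subcubic-string}: it suffices to exhibit a matching $M$ in $[5,-5]^n$ such that $[5,-5]^n/M$ is planar. The natural chord matching used for $n=8$ in \cref{fig:mk-string} fails for $n \ge 10$, because its contraction is the circulant $C_n(\{2,3\})$, which is triangle-free (no two of $\pm 2, \pm 3$ differ by an element of $\{\pm 2, \pm 3\}$ modulo $n$ once $n \ge 10$) and has $2n > 2n-4$ edges, violating the triangle-free planarity bound. So I would instead draw $M$ from the path edges.

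Labeling the Hamiltonian vertices $v_0, v_1, \ldots, v_{2n-1}$ in order, I would take $M = \{v_{2k}v_{2k+1} : 0 \le k < n\}$, a perfect matching of $n$ pairwise disjoint edges. Writing $u_k$ for the image of $\{v_{2k}, v_{2k+1}\}$ in the contraction, each surviving Hamiltonian edge $v_{2k+1}v_{2k+2}$ becomes $u_k u_{k+1}$, and each chord $v_{2k}v_{2k+5}$ becomes $u_k u_{k+2}$ (since the odd endpoint $v_{2k+5} = v_{2(k+2)+1}$ lies in $u_{k+2}$). So $[5,-5]^n/M$ is the circulant $C_n(\{1,2\})$, the square of the $n$-cycle, which is simple for $n \ge 5$.

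Finally I would produce an explicit planar embedding of $C_n(\{1,2\})$ for even $n$: place $u_0, \ldots, u_{n-1}$ on a circle with the distance-$1$ edges forming the boundary, draw the cycle $u_0 u_2 \cdots u_{n-2} u_0$ of distance-$2$ edges among the even-indexed vertices as a convex $(n/2)$-gon inside, and draw the cycle $u_1 u_3 \cdots u_{n-1} u_1$ of distance-$2$ edges among the odd-indexed vertices as a convex $(n/2)$-gon outside. The parity of $n$ ensures these two distance-$2$ cycles are vertex-disjoint, and placing them in different regions of the plane relative to the outer cycle avoids any crossings. The main subtlety I expect is recognizing that the chord matching cannot be extended to larger even $n$, forcing a switch to the path-edge matching; after that, the identification of the quotient and its planar embedding are both routine, and \cref{thm:subcubic-string} closes the argument.
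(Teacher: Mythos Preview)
Your proof is correct and, despite the framing, coincides with the paper's. The yellow matching in \cref{fig:mk-string} is \emph{not} the set of chords: the paper's ``non-path edges'' are the Hamiltonian-cycle edges lying between a forward and a backward chord, i.e.\ precisely your $M=\{v_{2k}v_{2k+1}\}$. Contracting this $M$ yields $C_n(\{1,2\})$, which for even $n$ is the $(n/2)$-antiprism the paper names, and your two-nested-cycles drawing is the standard antiprism embedding. Your opening paragraph about the chord matching $\{v_{2k}v_{2k+5}\}$ contracting to the non-planar $C_n(\{2,3\})$ is a correct side computation, but it rules out a matching the paper never uses; you can simply drop it.
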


\begin{proof}
It has a planarizing matching consisting of all non-path edges, such as the one in \cref{fig:mk-string}. Contracting this matching produces a planar graph isomorphic to an $(n/2)$-antiprism.
\end{proof}

We do not know whether $[5,-5]^n$ is a string graph for other odd values of $n$, or if so whether a subcubic obstacle can be obtained from it.

\end{document}